\newtheorem{theorem}{Theorem}[section]
\newtheorem{lemma}[theorem]{Lemma}
\newtheorem{corollary}[theorem]{Corollary}
\theoremstyle{definition}
\newtheorem{definition}[theorem]{Definition}
\newtheorem{problem}[theorem]{Problem}
\theoremstyle{remark}
\newtheorem{remark}[theorem]{Remark}
\numberwithin{equation}{section}
\begin{document}

\title{When Lanczos Iterations Generate Symmetric Quadrature Nodes?}


\author{Wenhao Li}
\address{Hong Kong Baptist University, Hong Kong, P.R. China}
\curraddr{}
\email{liwenhao@uic.edu.cn}
\thanks{Hong Kong Baptist University, Hong Kong, P.R. China; Guangdong Provincial Key Laboratory of Interdisciplinary Research and Application for Data Science, BNU-HKBU United International College, Zhuhai, 519087, P.R. China. }

\author{Zongyuan Han}
\address{Beijing Normal University, Beijing 100875, P.R. China.}
\curraddr{}
\email{202131130030@mail.bnu.edu.cn}
\thanks{Laboratory of Mathematics and Complex Systems, Ministry of Education;  School of Mathematical Sciences, Beijing Normal University, Beijing 100875, P.R. China.}

\author{Shengxin Zhu}
\address{Research Center for Mathematics, Beijing Normal University, Zhuhai 519087, P.R. China.}
\curraddr{}
\email{Shengxin.Zhu@bnu.edu.cn}
\thanks{Research Center for Mathematics, Beijing Normal University, Zhuhai 519087, P.R. China; Guangdong Provincial Key Laboratory of Interdisciplinary Research and Application for Data Science, BNU-HKBU United International College, Zhuhai 519087, P.R. China. }

\subjclass[2020]{65D32, 65F10, 65F15}

\date{\today}


\begin{abstract}

The Golub-Welsch algorithm [\href{https://www.ams.org/journals/mcom/1969-23-106/S0025-5718-69-99647-1/}{ Math. Comp., 23: 221-230 (1969)}] has long been assumed symmetric for estimating quadratic forms. Recent research indicates that asymmetric quadrature nodes may be more often and the existence of a practical symmetric quadrature for estimating matrix quadratic form is even doubtful.
This paper derives a sufficient condition for symmetric quadrature nodes for estimating quadratic forms involving the Jordan-Wielandt matrices which frequently arise from many applications. The condition is closely related to how to construct an initial vector for the underlying Lanczos process. 
Applications of such constructive results are demonstrated by estimating the Estrada index in complex network analysis. 


\end{abstract}

\maketitle

\section{Introduction}
\label{sec:background}

Given a smooth function $f$ defined on $[a, b]$ and a non-negative weight function $\omega$, the Golub-Welsch algorithm \cite{GW69} produces a Gaussian quadrature rule,
$$\int_{a}^{b} f(t) \omega(t) dt \approx \sum_{i=1}^m \tau_k f(\theta_k), $$
where quadrature nodes $\{ \theta_k\}_{k=1}^m$ are the eigenvalues of the tridiagonal Jacobi matrix produced in the Lanczos iteration \cite{L50}, and the quadrature weights are the squares of the first element of each eigenvector of the Jacobi matrix. Such a Gauss quadrature rule based on the Lanczos method (or Lanczos quadrature method) plays a central role in estimating quadratic forms involving matrix functions  
\begin{equation}
      \label{eq:GQ}
    Q(\boldsymbol{u},f,A) = \boldsymbol{u}^T f(A) \boldsymbol{u} \approx \Arrowvert\boldsymbol{u}\Arrowvert_2^2 \sum_{k=1}^m \tau_k f(\theta_k),
\end{equation}
where $A$ is a symmetric matrix, $\boldsymbol{u}$ is a vector, see \cite{GS94,GM94} or the following sessions for more details. This method was previously used in the analysis of iterative methods \cite{F92,M97,BG99,CGR99} \cite[p.195]{M99} and the finite element method \cite{R92}. Bai, Golub and Fahey applied this method within the realm of Quantum Chromodynamics (QCD) \cite{BFG96}, and increasing more applications are emerging. In particular, it can be used to measure the centrality of complex networks. For example, subgraph centrality can be quantified as $\boldsymbol{e}_i^T e^{\beta A} \boldsymbol{e}_i$ \cite{ER05}, while resolvent-based subgraph centrality is associated with $\boldsymbol{e}_i^T(I - \alpha A)^{-1} \boldsymbol{e}_i$ \cite{EH10}. Fast computing such centrality is important in practical scenarios such as urban public transport \cite{BS21,BS22}. In genomics, statisticians often require approximations of the distribution of quadratic forms \eqref{eq:GQ} with normal distributed vectors \cite{LBPNR18,CL19,BQF}. In Gaussian process regression, hyperparameter optimization is achieved through maximization of a likelihood function, which requires the estimation of quadratic forms \eqref{eq:GQ} \cite{DE17,zhu18,SA22}. 
The Lanczos quadrature method offers effective computational means for accurately computing these quantities. Furthermore, this method can be integrated into Krylov spectral methods to solve time-dependent partial differential equations \cite{L05,L07}.

For a long time, the quadrature nodes generated by the Lanczos algorithm were assumed to be symmetric \cite{R69}. For example, the error analysis given by Ubaru, Chen and Saad relied upon such an assumption \cite[Section 4.1]{UCS17}. Until very recently, Cortinovis and Kressner highlighted that the quadrature nodes may more frequently exhibit asymmetry, consequently deriving a corresponding error analysis \cite{CK21}. Some researches even doubt the existence of symmetric quadrature for estimating quadratic forms. To make it clear, an intriguing inquiry arises regarding the circumstances under which the Lanczos algorithm would yield symmetric quadrature nodes for real applications. This question is equivalent to the following problem.
\begin{problem}
\label{prob:1}
Given a symmetric matrix $A$ and an initial vector $\boldsymbol{u}$, when are the Ritz values (eigenvalues of the tridiagonal Jacobi matrix obtained in the Lanczos process) symmetrically distributed?
\end{problem}

As far as we know, studies have investigated spatial distribution \cite{C11,CE12,B13,M15}, convergence characteristics \cite{TM12,ELM21}, and stabilization techniques \cite{MS06,P71,P80} regarding Ritz values within the Lanczos method and the Arnoldi process, but none of them directly address this problem. To answer Problem \ref{prob:1}, a series of interesting questions are proposed and answered step by step. Finally,  we delicately construct starting vectors for the Lanczos process such that symmetric quadratic rules are ensured for a wide class of matrices. Besides, the application of our theory is demonstrated by designed matrices and problems arising from network analysis.

The major contribution (Theorem \ref{Thm:main}) on guaranty of matrix-based symmetric quadrature rule for quadratic form estimations is enlightened by the scenario of symmetric zeros of Jacobi orthogonal polynomials. Other inspirations are from  the well-known Cauchy interlacing theorem and the property of symmetric eigenvalues with respect to Jordan-Wielandt matrices. In parallel to the symmetric weight functions for Jacobi orthogonal polynomials, we first introduce the \textit{absolute palindrome} (Definition \ref{Def:sav}) and show the existence of measure enjoying such a property (Theorem \ref{thm:ap}). Based on these, we derive a general intermediate result on when $m$-step Lanczos iteration generates Jacobi matrix with constant diagonal elements (Lemma \ref{Lem:Suf}) and then when symmetric quadrature rule may arise (Theorem \ref{Thm:main}).
 In particular, we show that one can construct a certain type of initial vectors for Jordan-Wielandt matrices (Theorem \ref{prop:1} and Theorem \ref{prop:2}) to guarantee symmetric Ritz values, i.e. symmetric quadrature rule. 
Besides, this paper also analyzes the lower and upper bounds of the difference between the two least required Lanczos iterations $m$ (for symmetric/asymmetric quadrature rule) that guarantee the same approximation accuracy of quadratic forms \eqref{eq:GQ} (Corollary \ref{coro:diffm}). 

The paper is organized as follows. We commence by revisiting the Lanczos quadrature method's process for estimating quadratic forms \eqref{eq:GQ} in Section \ref{sec:basics}. In Section \ref{sec:EA}, we introduce a sufficient condition (Theorem \ref{Thm:main}) tailored for the Lanczos quadrature with symmetric quadrature nodes and suggest the construction of initial vectors for Jordan-Wielandt matrices through Theorem \ref{prop:1} and Theorem \ref{prop:2}. Subsequently, in Section \ref{sec:comparison}, Corollary \ref{coro:diffm} studies the potential savings in iterations through such symmetry. Numerical experiments are given in Section \ref{sec:experiments} to illustrate the validity of our theoretical results in real applications such as complex network analysis on directed graphs, followed by conclusions in Section \ref{sec:cr}.

\section{The Lanczos quadrature method}
\label{sec:basics}
For a symmetric (positive definite\footnote{The Lanczos process does not require positive definiteness, but in some applications such property is demanded.}) matrix $A$, its quadratic form $Q(\boldsymbol{u},f,A)$ \eqref{eq:GQ} with vector $\boldsymbol{u}$ and matrix function $f$ can be estimated by the Lanczos quadrature method \cite{GS94,GM94}. With the eigen-decomposition $A = Q\Lambda Q^T$, one obtains $f(A) = Qf(\Lambda)Q^T$. Further, one may assume $\Lambda = \mathrm{diag}(\lambda_1, \lambda_2, \cdots, \lambda_n)$ and $\lambda_{\min} = \lambda_1 \le \lambda_2 \le \ldots \le \lambda_n = \lambda_{\max}$ without loss of generality. Then $f(\Lambda)$ is a diagonal matrix with entries $\{f(\lambda_j)\}_{j=1}^n$. Let $\boldsymbol{u}$ be normalized as $\boldsymbol{v} = \boldsymbol{u}/\Arrowvert\boldsymbol{u}\Arrowvert_2$, and $\boldsymbol{\mu} = Q^T \boldsymbol{v}$, \eqref{eq:GQ} further reads
\begin{equation}
    \label{eq:qfmu}
    Q(\boldsymbol{u},f,A) = \boldsymbol{u}^Tf(A)\boldsymbol{u} = \Arrowvert\boldsymbol{u}\Arrowvert_2^2 \boldsymbol{v}^T Q f(\Lambda)Q^T\boldsymbol{v} = \Arrowvert\boldsymbol{u}\Arrowvert_2^2 \boldsymbol{\mu}^Tf(\Lambda)\boldsymbol{\mu}. 
\end{equation}
The last quadratic form in \eqref{eq:qfmu} can be reformulated as a Riemann-Stieltjes integral $\mathcal{I}$, 
\begin{equation}
    \begin{aligned}
    \label{eq:RS}
    \Arrowvert\boldsymbol{u}\Arrowvert_2^2 \boldsymbol{\mu}^T f(\Lambda)\boldsymbol{\mu} = \Arrowvert\boldsymbol{u}\Arrowvert_2^2\sum_{j=1}^n f(\lambda_j)\mu_j^2  = \Arrowvert\boldsymbol{u}\Arrowvert_2^2 \int_{\lambda_1}^{\lambda_n}f(t) d\mu(t) = \Arrowvert\boldsymbol{u}\Arrowvert_2^2 \mathcal{I}.
\end{aligned}
\end{equation}
where $\mu_j$ is the $j^{th}$ element of $\boldsymbol{\mu}$ and measure $\mu(t)$ is a piecewise constant function of $\mathcal{I}$
\begin{equation}
    \label{eq:measure_f}
        \mu(t)=\left\{
    \begin{aligned}
    &\quad 0 , & {\rm if} \ t < \lambda_1=a, \\
    &\sum_{j=1}^{k-1} \mu_j^2 , & {\rm if}\ \lambda_{k-1} \le t < \lambda_k, k=2,...,n, \\
    &\sum_{j=1}^{n} \mu_j^2 , & {\rm if} \ t \ge \lambda_n=b.
    \end{aligned}
    \right.
\end{equation}
According to the Gauss quadrature rule \cite[Chapter 6.2]{GM09}, the Riemann Stieltjes integral $\mathcal{I}$ can be approximated by an $m$-point Lanczos quadrature rule $\mathcal{I}_{m}$ so the last term in equation \eqref{eq:RS} reads
\begin{equation}
\label{eq:GQR}
\Arrowvert\boldsymbol{u}\Arrowvert_2^2 \mathcal{I} \approx \Arrowvert\boldsymbol{u}\Arrowvert_2^2 \mathcal{I}_m  = \Arrowvert\boldsymbol{u}\Arrowvert_2^2 \sum_{k=1}^{m} \tau_k f(\theta_k) \equiv Q_m(\boldsymbol{u},f,A),
\end{equation}
where $m$ is the number of quadrature nodes. The quadrature nodes $\{\theta_k\}_{k=1}^m$ and weights $\{\tau_k\}_{k=1}^m$ can be obtained by the Lanczos algorithm \cite{BFG96,GM09}. Let $T_m$ be the Jacobi matrix obtained in the Lanczos process, then the nodes are the eigenvalues of $T_m$, and weights are the squares of the first elements of the corresponding normalized eigenvectors \cite{GW69}. Algorithm \ref{alg:Lanc} outlines how to compute the quadratic form \eqref{eq:GQ} via the Lanczos quadrature method \cite[Section 7.2]{GM09}. Note that if the Lanczos process breaks down before or at step $m$, Algorithm \ref{alg:Lanc} computes \eqref{eq:GQ} exactly and terminates.
\begin{algorithm}[t]
        \raggedright
	\caption{Lanczos Quadrature Method for Quadratic Form Estimation} 
	\label{alg:Lanc}
	\hspace*{0.02in} {\bf Input:} Symmetric (positive definite) matrix $A \in \mathbb{R}^{n\times n}$, vector $\boldsymbol{u} \in \mathbb{R}^{n}$, matrix function $f$, steps of Lanczos iterations $m$.\\
	\hspace*{0.02in} {\bf Output:} Approximation of the quadratic form $Q_m(\boldsymbol{u}, f, A) \approx \boldsymbol{u}^T f(A) \boldsymbol{u}$.
	\begin{algorithmic}[1]
	    \State $\boldsymbol{v}_{1} = \boldsymbol{v} = \boldsymbol{u}/\Arrowvert \boldsymbol{u}\Arrowvert_2$
        \State $\alpha_1 = {\boldsymbol{v}_{1}}^T A \boldsymbol{v}_{1}$
	    \State $\boldsymbol{u}_{2} = A\boldsymbol{v}_{1} - \alpha_1 \boldsymbol{v}_{1}$
		\For{$k = 2 \text{ to } m$}
			\State $\beta_{k-1} = \Arrowvert\boldsymbol{u}_k\Arrowvert_2$
			\If{$\beta_{k-1} = 0$} 
                \State $m = k-1$
                \State \textbf{break}
                \EndIf 
                \State $\boldsymbol{v}_k = \boldsymbol{u}_k/\beta_{k-1}$
			\State $\alpha_{k} = {\boldsymbol{v}_{k}}^T A \boldsymbol{v}_{k}$
			\State $\boldsymbol{u}_{k+1} = A \boldsymbol{v}_{k} - \alpha_{k}             \boldsymbol{v}_{k} - \beta_{k-1}\boldsymbol{v}_{k-1}$
		\EndFor
            \State \textbf{end for}
            \State $T_{m} = \left[\begin{array}{ccccc}
               \alpha_1 &  \beta_1 & 0 & \cdots & 0 \\
               \beta_1 & \alpha_2 & \beta_2 & \ddots & \vdots \\
               0 & \beta_2 & \ddots & \ddots & 0 \\
               \vdots & \ddots & \ddots & \ddots & \beta_{m-1} \\
               0 & \cdots & 0 & \beta_{m-1} & \alpha_{m}
            \end{array}\right].$
            \State $[V,D] = \texttt{eig}(T_{m})$
            \State $[\tau_1,\ldots,\tau_m] = (\boldsymbol{e}_1^T V)\odot(\boldsymbol{e}_1^T V) $
            \State $[\theta_1,\ldots,\theta_m]^T = \texttt{diag}(D)$
            \State \Return $Q_m(\boldsymbol{u},f,A) = \Arrowvert\boldsymbol{u}\Arrowvert_2^2 \sum_{k=1}^m \tau_k f(\theta_k)$
	\end{algorithmic}
\end{algorithm}

\section{Symmetric Ritz values in the Lanczos iterations}
\label{sec:EA}

In this section we address the inquiry raised in Problem \ref{prob:1}. Recall that given a symmetric matrix $A$ and a starting vector $\boldsymbol{u}$, Algorithm \ref{alg:Lanc}, executed for $m$ steps, yields a symmetric tridiagonal matrix denoted as $T_{m}$.
We are enlightened by the well-known Cauchy interlacing theorem  \cite[Theorem 10.1.1]{P98}.
\begin{theorem}
    \label{lem:cauchy} 
    Let $T_m \in \mathbb{R}^{m\times m}$ be the symmetric tridiagonal matrix generated in the $m$-step Lanczos method with symmetric matrix $A \in \mathbb{R}^{n \times n}$ and initial vector $\boldsymbol{u}$. Then under the assumption that the values are non-decreasing by index, the eigenvalues of $A$, $\{\lambda_{k}\}_{k=1}^{n}$, interlace the eigenvalues of $T_{m}$, $\{\theta_k\}_{k=1}^{m}$,
    \begin{equation}
        \label{eq:cauchy}
        \lambda_k \le \theta_k \le \lambda_{k+n-m}, k = 1,\ldots, m.
    \end{equation}
\end{theorem}
\begin{proof}
    Two cases should be considered in the discussion of Lanczos iterations. First, if the Lanczos process does not break down at any step $m < n$, then all the eigenvalues of the principal submatrix (of $T_n$) $T_m,m<n$ are simple. Moreover, since $T_n = V_n^T A V_n$, where $V_n$ is the orthogonal matrix that stores the vectors $\{\boldsymbol{v}_k\}_{k=1}^n$ generated during the iteration, the symmetric tridiagonal matrix $T_n$ has the same eigenvalues as $A$'s. Then according to \cite[Theorem 10.1.1]{P98}, one may trivially obtain \eqref{eq:cauchy} and no equal sign is possible. On the other hand, if the Lanczos process breaks down at certain step $m < n$, then it cannot be run to completion and $T_m$ has the same eigenvalues as $A$'s \cite[p. 41]{GM09} and all the eigenvalues of smaller Jacobi matrices $T_i, i<m$ are strictly interlaced by them \cite[Theorem 3.3]{GM09}. In such scenario, the equal signs in \eqref{eq:cauchy} hold.
\end{proof}

One may observe that if matrix $A$ possesses an extremely skewed eigenvalue distribution, the resulting Ritz values are unlikely to be always symmetric. Consequently, we assume the symmetry of the eigenvalue distribution as a crucial property for ensuring symmetric Ritz values during the Lanczos iteration. This raises the question:
\begin{problem}
    \label{prob:mat}
    What types of symmetric matrices $A$ have symmetric eigenvalues?
\end{problem}

In Section \ref{sec:mat} we discuss a type of matrices characterized by the symmetry of eigenvalues, which arises in various practical applications.

\subsection{Symmetric matrices with symmetric eigenvalues}
\label{sec:mat}

There are several works demonstrating that the Jordan-Wielandt matrices of the following form
\begin{equation}
    \label{Eq:A}
    A = \begin{bmatrix}
        O & B \\
        B^H & O
    \end{bmatrix} \in \mathbb{C}^{(n_1+n_2) \times (n_1+n_2)},
\end{equation}
have symmetric eigenvalue distributions \cite[Theorem 1.2.2]{B96}\cite[Proposition 4.12]{S03}, where $O$ denotes zero matrix and $B \in \mathbb{C}^{n_1 \times n_2}$. 

Such matrices \eqref{Eq:A} exist in various applications. For instance, the graph of a finite difference matrix is \emph{bipartite}, meaning that the vertices can be divided into two sets by the red-black order so that no edges exist in each set \cite[p. 211]{HY81} \cite[p. 123]{S03}. Such matrices can be transformed into the form of \eqref{Eq:A} with non-zero diagonal entries. In the analysis of complex networks, a directed network of $n$ nodes with asymmetric adjacency matrix $B \in \mathbb{R}^{n\times n}$ can be extended to a bipartite undirected network with symmetric \emph{block supra-adjacency matrix} of form \eqref{Eq:A} via bipartization \cite{BEK13,BB20,BS22}. Besides, given a matrix $B \in \mathbb{R}^{n_1\times n_2}$ and two vectors $\boldsymbol{x} \in \mathbb{R}^{n_1}, \boldsymbol{y} \in \mathbb{R}^{n_2}$, a bilinear form $\boldsymbol{x}^T B \boldsymbol{y}$ can be transformed to a quadratic form
$\boldsymbol{z}^T A \boldsymbol{z}$ with $$\boldsymbol{z} = \begin{bmatrix}
    \boldsymbol{x} \\
    \boldsymbol{y}
\end{bmatrix} \text{ and } A = \frac{1}{2} \begin{bmatrix}
    O & B \\
    B^T & O
\end{bmatrix}$$
since
$$ \begin{aligned}
    \boldsymbol{z}^T A \boldsymbol{z} = \frac{1}{2} \begin{bmatrix}
    \boldsymbol{x}^T & \boldsymbol{y}^T
\end{bmatrix} \begin{bmatrix}
    O & B \\
    B^T & O
\end{bmatrix} \begin{bmatrix}
    \boldsymbol{x} \\
    \boldsymbol{y}
\end{bmatrix}
& = \frac{\boldsymbol{y}^TB^T\boldsymbol{x} + \boldsymbol{x}^T B \boldsymbol{y}}{2}& = \boldsymbol{x}^T B \boldsymbol{y}.
\end{aligned}$$

Such Jordan-Wielandt matrices enjoys the property of symmetric distribution of eigenvalues.
\begin{theorem}
\label{thm:m=n} \cite[Theorem 1.2.2]{B96}
    Let the singular value decomposition of $B \in \mathbb{C}^{n_1 \times n_2}$ be $B = U \Sigma V^H,$ where $\Sigma = \begin{bmatrix}
        \Sigma_1 & O \\ O & O
    \end{bmatrix}$, $\Sigma_1 = \mathrm{diag}(\sigma_1, \sigma_2, \cdots, \sigma_r)$, $r = \mathrm{rank}(B)$,
    $$U = \left[U_1, U_2\right], U_1 \in \mathbb{C}^{n_1 \times r}, U_2 \in \mathbb{C}^{n_1 \times (n_1-r)},$$
    $$V = \left[V_1, V_2\right], V_1 \in \mathbb{C}^{n_2 \times r}, V_2 \in \mathbb{C}^{n_2 \times (n_2-r)}.$$
    Then
    $$A = \begin{bmatrix}
        O & B \\ B^H & O
    \end{bmatrix} = P^H \begin{bmatrix}
        \Sigma_1 & O & O & O \\ O &  -\Sigma_1 & O & O \\ O & O & O & O \\O & O & O & O
    \end{bmatrix} P,$$
    where $P$ is unitary
    $$ P = \frac{1}{\sqrt{2}}\begin{bmatrix}
        U_1 & U_1 & \sqrt{2} U_2 & O \\ V_1 & -V_1 & O & \sqrt{2}V_2 
    \end{bmatrix}^H.$$
    Thus $2r$ eigenpairs of $A$ are $\left(\pm \sigma_i, \begin{bmatrix}
        \boldsymbol{u}_i \\
        \pm \boldsymbol{v}_i
    \end{bmatrix}\right), i = 1,\ldots,r$, and the left eigenvalues $0$ repeated $(n_1+n_2-2r)$ times, where $\boldsymbol{u}_i$ and $\boldsymbol{v}_i$ are the columns of $U_1$ and $V_1$ respectively.
\end{theorem}

It is trivial that if the diagonal entries of a Jordan-Wielandt matrix are $\gamma$, then $2r$ eigenpairs of $A$ are $\left(\gamma \pm \sigma_i, \begin{bmatrix}
    \boldsymbol{u}_i \\ \pm \boldsymbol{v}_i
\end{bmatrix}\right), i = 1,\ldots, r$. Therefore, we find a class of matrices with symmetric eigenvalue distributions arising from various applications. Nonetheless, presuming a symmetric eigenvalue distribution may not suffice to ensure the symmetry of Ritz values. We proceed to inquire:
\begin{problem}
\label{prob:2}
Given a symmetric matrix $A$ with symmetric eigenvalues and an initial vector $\boldsymbol{u}$, when are the Ritz values (eigenvalues of the tridiagonal Jacobi matrix obtained in the Lanczos process) symmetrically distributed?
\end{problem}

Consider the case of $n$-degree Jacobi orthogonal polynomial 
$$P_{n}^{(\alpha,\beta)}(t) = \frac{(-1)^n}{2^n n!}(1-t)^{-\alpha}(1+t)^{-\beta} \frac{d^n}{dt^n}\left[ (1-t)^{n+\alpha}(1+t)^{n + \beta} \right]$$
with the continuous weight function $\omega(t) = (1-t)^{\alpha}(1+t)^{\beta},t\in[-1,1]$. When $\alpha = \beta$, the weight function $\omega(t)$ is even, the measure function $\mu(t)$ is central symmetric and the zeros of $P_{n}^{(\alpha,\beta)}(t)$ are symmetric about $0$. Enlightened by such an observation, we conjecture that the symmetry of Ritz values generated by the Lanczos process is linked to the underlying discrete measure function $\mu(t)$ as outlined in \eqref{eq:measure_f}. Elaborate explanations and discussions regarding this conjecture are provided in Section \ref{sec:ap}.

\subsection{Central symmetric discrete measure functions}
\label{sec:ap}

As discussed, the central symmetry of $\mu(t)$ is considered as another essential property for guaranteeing the symmetry of Ritz values in Lanczos iterations. One may observe that \eqref{eq:measure_f} is defined by the components of $\boldsymbol{\mu} = Q^T \boldsymbol{v}$, where $Q$ denotes the orthonormal matrix in the eigen-decomposition of $A = Q \Lambda Q^T$, and $\boldsymbol{v} = \boldsymbol{u}/\Arrowvert\boldsymbol{u}\Arrowvert_2$ represents the normalized initial vector. The term $\mu_i^2$ signifies the increment of $\mu(t)$ from $t = \lambda_i$ to $t = \lambda_{i+1}$, where $\mu_i$ denotes the $i^{th}$ element of $\boldsymbol{\mu}$. Building upon our conjecture, we postulate that the first and last $i^{th}$ step increments $\mu_i^2$ of $\mu(t)$ are equal for $i = 1,\ldots,\lfloor n/2 \rfloor$. The subsequent definition aids in elucidating this assumption.
\begin{definition}
    \label{Def:sav}
    A vector $\boldsymbol{w} \in \mathbb{R}^n$ is said to be an \textit{absolute palindrome} if 
    $$ |w_i| = |w_{n+1-i}|, i = 1,\ldots, \lfloor\frac{n}{2}\rfloor. $$
\end{definition}

Similar to the question raised in Problem \ref{prob:mat}, we would like to know,
\begin{problem}
    \label{prob:vec}
    Given a symmetric matrix $A = Q\Lambda Q^T$ and a normalized initial vector $\boldsymbol{v}$. What conditions will guarantee an absolute palindrome $\boldsymbol{\mu} = Q^T \boldsymbol{v}$?
\end{problem}

Theorem \ref{thm:ap} establishes the existence of such conditions.
\begin{theorem}
    \label{thm:ap}
    Let $Q = [\boldsymbol{q}_1,\ldots,\boldsymbol{q}_n]\in \mathbb{R}^{n\times n}$ be an orthonormal matrix, $\boldsymbol{\xi} = [\xi_1,\ldots,\xi_n]^T$ be a coefficient vector and $\boldsymbol{v} \in \mathbb{R}^n$ be a normalized initial vector for the Lanczos iteration. Then $\boldsymbol{v}$ can be written as a linear combination
    $$ \boldsymbol{v} = Q \boldsymbol{\xi} =  \sum_{i=1}^n \xi_i \boldsymbol{q}_i$$
    with the columns of $Q$ as the basis of $\mathbb{R}^n$ and real coefficients $\{\xi_i\}_{i=1}^n$. If $\boldsymbol{\xi}$ is an absolute palindrome, then $\boldsymbol{\mu} = Q^T \boldsymbol{v}$ is also an absolute palindrome.
\end{theorem}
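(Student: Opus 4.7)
The statement is essentially a reformulation that becomes a tautology once the matrix algebra is written out, so my plan is very short. The plan is to recognize that the expansion coefficients $\{\xi_i\}$ of $\boldsymbol{v}$ in the orthonormal basis $\{\boldsymbol{q}_i\}$ are exactly the entries of $Q^T\boldsymbol{v}$, and then the absolute palindrome property transfers immediately.

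First, I would rewrite the expansion $\boldsymbol{v} = \sum_{i=1}^n \xi_i \boldsymbol{q}_i$ in matrix form as $\boldsymbol{v} = Q\boldsymbol{\xi}$, using that the columns of $Q$ are the $\boldsymbol{q}_i$. Next, I would apply $Q^T$ to both sides and invoke the orthonormality condition $Q^T Q = I_n$ to obtain
\begin{equation*}
\boldsymbol{\mu} = Q^T \boldsymbol{v} = Q^T Q \boldsymbol{\xi} = \boldsymbol{\xi}.
\end{equation*}
Thus component-wise $\mu_i = \xi_i$ for every $i=1,\ldots,n$.

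Finally, I would combine this identity with \cref{Def:sav}: the hypothesis that $\boldsymbol{\xi}$ is an absolute palindrome reads $|\xi_i| = |\xi_{n+1-i}|$ for $i = 1,\ldots,\lfloor n/2 \rfloor$, and substituting $\mu_i = \xi_i$ yields $|\mu_i| = |\mu_{n+1-i}|$ over the same index range. This is precisely the definition of $\boldsymbol{\mu}$ being an absolute palindrome, completing the proof.

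There is no real obstacle here; the only point worth a sentence of emphasis in the write-up is that uniqueness of the expansion coefficients in an orthonormal basis is what forces $\boldsymbol{\mu}$ and $\boldsymbol{\xi}$ to coincide, so the hypothesis on $\boldsymbol{\xi}$ is not just a sufficient but also a necessary condition on $\boldsymbol{\mu}$. This observation is useful for the discussion in \cref{sec:ap} because it means the central symmetry of the measure $\mu(t)$ can be read directly off the coefficients of $\boldsymbol{v}$ in the eigenbasis of $A$.
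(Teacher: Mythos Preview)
Your proposal is correct and follows essentially the same approach as the paper: both arguments show $\boldsymbol{\mu} = Q^T\boldsymbol{v} = \boldsymbol{\xi}$ using orthonormality of $Q$, after which the absolute palindrome condition transfers trivially. The paper additionally describes the subspace structure of all absolute palindromes $\boldsymbol{\xi}$, but this is supplementary discussion rather than part of the proof of the stated theorem.
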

\begin{proof}
    First, $\boldsymbol{\mu}$ reads
$$ \boldsymbol{\mu} = Q^T \boldsymbol{v} = \begin{bmatrix}
    \boldsymbol{q}_1^T \\
    \boldsymbol{q}_2^T \\
    \vdots \\
    \boldsymbol{q}_n^T
\end{bmatrix} \left[\xi_1 \boldsymbol{q}_1 + \xi_2\boldsymbol{q}_2 + \cdots + \xi_n \boldsymbol{q}_n \right] = \begin{bmatrix}
    \xi_1 \\
    \xi_2 \\
    \vdots \\
    \xi_n
\end{bmatrix} = \boldsymbol{\xi}.$$
This indicates that the assumption of $|\mu_{i}| = |\mu_{n+1-i}|$ is equivalent to $|\xi_i| = |\xi_{n+1-i}|, \\ i = 1,\ldots,\lfloor \frac{n}{2} \rfloor$. There must exist certain vector $\boldsymbol{\xi}$ that guarantees the property. Moreover, suppose $n$ is even, for vector $\boldsymbol{\xi}$, it can be expressed as
$$ \boldsymbol{\xi} = \xi_1 \begin{bmatrix}
    1 \\ 0 \\ \vdots \\ 0 \\ 0 \\ \vdots \\ 0 \\ \pm 1 
\end{bmatrix} + \xi_2 \begin{bmatrix}
    0 \\ 1 \\ \vdots \\ 0 \\ 0 \\ \vdots \\ \pm 1 \\ 0
\end{bmatrix} + \cdots + \xi_{\frac{n}{2}} \begin{bmatrix}
    0 \\ 0 \\ \vdots \\ 1 \\ \pm 1 \\ \vdots \\ 0 \\ 0
\end{bmatrix} = \xi_1 \boldsymbol{s}_1^{\pm} + \xi_2 \boldsymbol{s}_2^{\pm} + \cdots \xi_{\frac{n}{2}} \boldsymbol{s}_{\frac{n}{2}}^{\pm}. $$
Such independent vectors $\{\boldsymbol{s}_{j}^{\pm}\}_{j=1}^{n/2}$ form a subspace $S = \mathrm{span}\{\boldsymbol{s}_1^{\pm}, \boldsymbol{s}_2^{\pm}, \ldots, \boldsymbol{s}_{\frac{n}{2}}^{\pm}\}$, where the dimension is $n/2$. Note that there are $2^{\frac{n}{2}}$ such subspaces. For odd $n$, there would be $2^{\frac{n}{2} + 1}$ subspaces due to the additional vector $\boldsymbol{s}_{\frac{n}{2} + 1}^{\pm} = (0,\cdots,0,\pm 1,0,\cdots,0)^T$.
\end{proof}

\begin{remark}
    Note that the property of absolute palindrome may not be preserved under orthogonal transformation. For example, let $\boldsymbol{v}=(1, 2, 3, 2, 1)^T,$
    $$Q=\begin{bmatrix}
        0 & 1 & 0 & 0 & 0 \\
        1 & 0 & 0 & 0 & 0 \\
        0 & 0 & 0 & 1 & 0 \\
        0 & 0 & 1 & 0 & 0 \\
        0 & 0 & 0 & 0 & 1
    \end{bmatrix},$$
    then $\boldsymbol{\mu}=Q^T \boldsymbol{v}= (2, 1, 2, 3, 1),$ which is not an absolute palindrome.
\end{remark}

Now we proceed to illustrate that the mentioned two conditions suffice to ensure symmetric Ritz values in the Lanczos process.

\subsection{A sufficient condition of symmetric Ritz values in the Lanczos iterations}
\label{Sec:existenceSQN}

We first prove that when $A$ has symmetric eigenvalue distribution and $\boldsymbol{\mu}^1 = Q^T \boldsymbol{v}^1$ is an absolute palindrome, the tridiagonal Jacobi matrix $T_{m}$ generated by the $m$-step Lanczos iteration has constant diagonal entries $\bar{\lambda}$.

\begin{lemma}
\label{Lem:Suf}
     Let $A = Q\Lambda Q^T \in \mathbb{R}^{n \times n}$ be symmetric, $\Lambda = \mathrm{diag}(\lambda_1,\lambda_2, \cdots,\lambda_n)$ with eigenvalues $\lambda_1\le \lambda_2 \le \ldots \le \lambda_n$ symmetric about $\bar{\lambda}$, and $\boldsymbol{v}^1 \in \mathbb{R}^{n}$ be a normalized initial vector for the Lanczos iteration. For the $m$-step Lanczos method with $A$ and $\boldsymbol{v}^1$ ($ m < n$), if vector $\boldsymbol{\mu}^1 = Q^T \boldsymbol{v}^1$ is an absolute palindrome, the Jacobi matrix $T_{m}$ generated by $m$-step Lanczos iteration will have constant diagonal entries $\bar{\lambda}$.
\end{lemma}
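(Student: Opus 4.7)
The plan is to reduce the statement to the case $\bar{\lambda}=0$ and then invoke the classical parity phenomenon for orthogonal polynomials associated with a measure symmetric about the origin: such polynomials are alternately even and odd in $t$, which forces the diagonal of the associated Jacobi matrix to vanish. The first step would be to observe that running \cref{alg:Lanc} on the shifted matrix $\tilde{A} := A - \bar{\lambda} I$ with the same initial vector $\boldsymbol{v}$ produces exactly the same Lanczos vectors $\boldsymbol{v}_k$ and the same subdiagonal entries $\beta_k$, and merely shifts the diagonal entries by $-\bar{\lambda}$. So showing $\alpha_k = \bar{\lambda}$ for the original matrix is equivalent to showing that the Jacobi matrix generated by $\tilde{A}$ has identically zero diagonal. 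After this reduction we may assume $\bar{\lambda}=0$ and $\lambda_{n+1-i}=-\lambda_i$.

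Next I would reinterpret the hypothesis through the Riemann--Stieltjes measure $\mu(t)$ defined in \eqref{eq:measure_f}. The absolute palindrome condition $\mu_i^2 = \mu_{n+1-i}^2$ together with $\lambda_{n+1-i}=-\lambda_i$ makes $\mu(t)$ centrally symmetric about $0$, so by pairing the $i$-th and $(n{+}1{-}i)$-th summands, every odd moment vanishes:
\[
\boldsymbol{v}^T A^{2j+1}\boldsymbol{v} \;=\; \sum_{i=1}^{n}\mu_i^2 \lambda_i^{2j+1} \;=\; 0, \qquad j\ge 0.
\]
In parallel, I would recall the standard identification $\boldsymbol{v}_{k+1} = p_k(A)\boldsymbol{v}_1$, where $\{p_k\}_{k\ge 0}$ is the sequence of orthonormal polynomials for $\mu$, governed by the three-term recurrence
\[
\beta_k p_k(t) \;=\; (t - \alpha_k)\,p_{k-1}(t) - \beta_{k-1} p_{k-2}(t),\qquad p_0\equiv 1,
\]
together with $\alpha_k = \int t\, p_{k-1}(t)^2\, d\mu(t) = \sum_{i=1}^n \mu_i^2\, \lambda_i\, p_{k-1}(\lambda_i)^2$.

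The main step is an induction on $k$ proving simultaneously that (i) $p_k(-t) = (-1)^k p_k(t)$ and (ii) $\alpha_{k+1}=0$. The base case is immediate: $p_0\equiv 1$ is even, and $\alpha_1 = \sum_i \mu_i^2 \lambda_i = 0$ by the vanishing of the first odd moment. For the inductive step, assuming $p_k$ has parity $(-1)^k$, the function $t\mapsto \lambda\, p_k(\lambda)^2$ is odd, and pairing $i$ with $n{+}1{-}i$ in $\alpha_{k+1}= \sum_i \mu_i^2 \lambda_i p_k(\lambda_i)^2$ gives $\alpha_{k+1}=0$. Then the recurrence reduces to $\beta_k p_{k+1}(t) = t p_k(t) - \beta_{k-1} p_{k-1}(t)$; both $t p_k(t)$ and $p_{k-1}(t)$ have parity $(-1)^{k+1}$ by the inductive hypothesis, hence so does $p_{k+1}$. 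Unwinding the shift, $\alpha_k = \bar{\lambda}$ for every $k \in \{1,\ldots,m\}$.

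The main obstacle I expect is careful bookkeeping around two points. First, the polynomial interpretation $\boldsymbol{v}_{k+1}=p_k(A)\boldsymbol{v}_1$ and the identification of Lanczos inner products $\boldsymbol{v}_i^T\phi(A)\boldsymbol{v}_j$ with $\int \phi\, d\mu$ must be cleanly invoked; this is standard (see \cite[Chap.~7]{GM09}) but underlies the whole argument. Second, Lanczos may break down at some step $k\le m$ when $\beta_k=0$, in which case $p_k$ is not determined beyond degree $k$; however, the induction is only required to reach the last index produced by the algorithm, so early termination does not affect the conclusion that all diagonal entries $\alpha_1,\ldots,\alpha_m$ equal $\bar{\lambda}$.
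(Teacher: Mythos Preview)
Your proposal is correct and takes a genuinely different route from the paper's proof. The paper works directly in the eigenbasis, tracking the sign structure of the vectors $\boldsymbol{\mu}_k = Q^T\boldsymbol{v}_k$ entry by entry: it shows by induction that each $\boldsymbol{\mu}_k$ remains an absolute palindrome, with an alternating sign convention distinguishing odd and even $k$ (the forms \eqref{eq:muodd} and \eqref{eq:mueven} in the paper), and then computes $\alpha_k = \boldsymbol{\mu}_k^T\Lambda\boldsymbol{\mu}_k = \bar{\lambda}$ by pairing symmetric eigenvalues. It does this without any shift reduction and treats even and odd $n$ separately. Your argument instead first shifts to $\bar{\lambda}=0$, then invokes the orthogonal-polynomial interpretation $\boldsymbol{v}_{k+1}=p_k(A)\boldsymbol{v}_1$ and the classical parity dichotomy for orthonormal polynomials of a measure symmetric about the origin, so that $\alpha_{k+1}=\int t\,p_k(t)^2\,d\mu(t)$ vanishes as the integral of an odd function. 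Your approach is shorter, avoids the even/odd-$n$ case split, and ties the result to a well-known phenomenon in orthogonal polynomial theory; the paper's approach is more self-contained (it never leaves the Lanczos recurrence or appeals to polynomial machinery) and makes the structure of the Lanczos vectors in the eigenbasis explicit, which is what the authors later exploit. The two inductions are in fact mirror images of one another: your parity statement $p_k(-t)=(-1)^k p_k(t)$ is exactly the polynomial encoding of the paper's alternating sign pattern in $\boldsymbol{\mu}_{k+1}$.
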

\begin{proof}
    We first recall that if one runs the $m$-step Lanczos iterations with $A - \rho I$ instead of $A$ with the same initial vector $\boldsymbol{v}^1$, then the same Lanczos vectors would be obtained and the symmetric tridiagonal matrix $T_m$ transforms as $T_m - \rho I_m$ \cite[Section 12.2.2]{P98}. Without loss of generality, we take $\rho = \bar{\lambda}$, let $A \to A - \rho I$, $\Lambda \to \Lambda - \rho I$ and prove $T_m \to T_m - \rho I_m$ having zero diagonals, $\alpha_k = 0, k = 1,\ldots,m$.

    Two important matrices $P$ and $S$ are introduced to complete this proof. Let $P$ denote the permutation matrix that reverse the order of entries, i.e., $(P \boldsymbol{\mu}^1)_i = \boldsymbol{\mu}^1_{n+1-i}, i = 1,\ldots,n$, and $S$ be the signature matrix (with diagonal entries $\pm 1$) that ensures $P\boldsymbol{\mu}^1 = S \boldsymbol{\mu}^1$. Note that $P$ and $S$ guarantee
    $$ P\Lambda P^T = -\Lambda, \Lambda S = S \Lambda, S = S^T, S^2 = I. $$
    For $k = 1,\ldots,m$, denote $\boldsymbol{\mu}^k = Q^T \boldsymbol{v}^k$, we wish to show
    \begin{equation}
        \label{eq:induct}
        \alpha_k = (\boldsymbol{v}^k)^T A \boldsymbol{v}^k =  0, \; P\boldsymbol{\mu}^{k+1} = (-1)^k S \boldsymbol{\mu}^{k+1}
    \end{equation}
    by mathematical induction. 
    
    Note that the second equality indicates that $\boldsymbol{\mu}^{k+1}$ is an absolute palindrome. For $k = 1$,
    $$ \begin{aligned}
        \alpha_1 &= (\boldsymbol{v}^1)^T Q \Lambda Q^T \boldsymbol{v}^1 =  (\boldsymbol{\mu}^1)^T \Lambda \boldsymbol{\mu}^1 = (P\boldsymbol{\mu}^1)^T P \Lambda P^T (P \boldsymbol{\mu}^1) = (S \boldsymbol{\mu}^1)^T (-\Lambda)(S \boldsymbol{\mu}^1) \\
        &= -(\boldsymbol{\mu}^1)^T S^T \Lambda S\boldsymbol{\mu}^1 = -(\boldsymbol{\mu}^1)^T S^T S \Lambda \boldsymbol{\mu}^1 = -(\boldsymbol{\mu}^1)^T \Lambda \boldsymbol{\mu}^1 = -\alpha_1.
    \end{aligned} $$
    Clearly $\alpha_1 = 0$. Together with the relationship between the first and second Lanczos vectors, we have
    $$\beta_1 \boldsymbol{v}^2 = A \boldsymbol{v}^1 - \alpha_1 \boldsymbol{v}^1 = A \boldsymbol{v}^1,  $$
    and thus
    $$ \boldsymbol{\mu}^2 = Q^T \boldsymbol{v}^2 = \frac{1}{\beta_1} Q^T Q \Lambda Q^T \boldsymbol{v}^1 = \frac{1}{\beta_1} \Lambda \boldsymbol{\mu}^1.$$
    Then
    $$ P \boldsymbol{\mu}^2 = \frac{1}{\beta_1} P \Lambda P^T P \boldsymbol{\mu}^1 = - \frac{1}{\beta_1} \Lambda P \boldsymbol{\mu}^1 = -\frac{1}{\beta_1} \Lambda S\boldsymbol{\mu}^1 = -S \left(\frac{1}{\beta_1} \Lambda \boldsymbol{\mu}^1\right) = -S \boldsymbol{\mu}^2, $$
    which proves the second equation in \eqref{eq:induct}. Now assume \eqref{eq:induct} is correct for $k\ge 1$. For $k + 1$,
    $$ \alpha_{k+1} = (\boldsymbol{\mu}^{k+1})^T \Lambda \boldsymbol{\mu}^{k+1} = (P \boldsymbol{\mu}^{k+1})^TP \Lambda P^T P \boldsymbol{\mu}^{k+1} = -(S\boldsymbol{\mu}^{k+1})^T \Lambda S\boldsymbol{\mu}^{k+1} = -\alpha_{k+1},$$
    which gives $\alpha_{k+1} = 0$. Based on the three-term recurrence between Lanczos vectors 
    $$ \beta_{k+1} \boldsymbol{v}^{k+2} = A \boldsymbol{v}^{k+1} - \alpha_{k+1}\boldsymbol{v}^{k+1} - \beta_k \boldsymbol{v}^k = A\boldsymbol{v}^{k+1} - \beta_k \boldsymbol{v}^k,$$ 
    it is trivial that
    $$ \boldsymbol{\mu}^{k+2} = Q^T \boldsymbol{v}^{k+2} = \frac{1}{\beta_{k+1}}\left( Q^T Q \Lambda Q^T \boldsymbol{v}^{k+1} - \beta_k Q^T \boldsymbol{v}^k \right) = \frac{1}{\beta_{k+1}} \left(\Lambda \boldsymbol{\mu}^{k+1} - \beta_k \boldsymbol{v}^{k}\right).$$
    Then 
    $$ \begin{aligned}
        P\boldsymbol{\mu}^{k+2} &= \frac{1}{\beta_{k+1}}\left(P\Lambda P^T P \boldsymbol{\mu}^{k+1} - \beta_k P \boldsymbol{\mu}^k\right)\\
        &= \frac{1}{\beta_{k+1}} \left( -\Lambda P \boldsymbol{\mu}^{k+1} - \beta_k P \boldsymbol{\mu}^{k} \right) \\
        &= \frac{1}{\beta_{k+1}} \left( -(-1)^k \Lambda S \boldsymbol{\mu}^{k+1} - (-1)^{k-1}\beta_k S \boldsymbol{\mu}^k \right) \\
        &= (-1)^{k+1} S \left(\frac{1}{\beta_{k+1}} \left( \Lambda \boldsymbol{\mu}^{k+1} - \beta_k \boldsymbol{\mu}^k \right) \right) \\
        & = (-1)^{k+1} S \boldsymbol{\mu}^{k+2},
    \end{aligned} $$
    which completes the proof of \eqref{eq:induct}. Lemma \ref{Lem:Suf} is proven by simply adding $\rho = \bar{\lambda}$ back to the transformed $A$.
\end{proof}

Under the same assumptions, the Lanczos process results in symmetrically distributed Ritz values. Meanwhile, the quadrature weights with respect to the pairs of symmetric quadrature nodes are equal.

\begin{theorem}
\label{Thm:main}
    Let $A = Q\Lambda Q^T \in \mathbb{R}^{n \times n}$ be symmetric, $\Lambda = \mathrm{diag}(\lambda_1,\lambda_2,\cdots,\lambda_n)$ with eigenvalues $\lambda_1\le \lambda_2 \le \ldots \le \lambda_n$ symmetric about $\bar{\lambda}$, and $\boldsymbol{v}^1 \in \mathbb{R}^{n}$ be a normalized initial vector for the Lanczos iteration. For the $m$-step Lanczos method with $A$ and $\boldsymbol{v}^1$ ($ m = 1,\ldots, n$), if vector $\boldsymbol{\mu}^1 = Q^T \boldsymbol{v}^1$ is an absolute palindrome, the distribution of $m$ Ritz values will be symmetric about $\bar{\lambda}$, and the quadrature weights corresponding to the pairs of symmetric quadrature nodes are equal.
\end{theorem}
\begin{proof}
    It is worth noting that every symmetric tridiagonal matrix with constant diagonal elements $\bar{\lambda}$ can be rearranged into the form
    \begin{equation}
    \label{Eq:JacobiIter}
        \begin{bmatrix}
            \bar{\lambda} I_{n_1} & B \\ B^T & \bar{\lambda} I_{n_2}
        \end{bmatrix}.
    \end{equation}
    This rearrangement follows the well-known red-black ordering \cite[p. 211]{HY81} \cite[p. 123]{S03}, where either $n_1 = n_2$ or $n_1 = n_2 + 1$. According to Theorem \ref{thm:m=n}, if the conditions specified in Theorem \ref{Thm:main} are satisfied, the Jacobi matrix $T_{m}$ yielded by the $m$-step Lanczos iteration will possess a symmetric eigenvalue distribution. Namely, the $m$ Ritz values exhibit symmetry about $\bar{\lambda}$. Furthermore, based on the equivalence of the pairs of first entries in the eigenvectors corresponding to the symmetric eigenvalues (as demonstrated in Theorem \ref{thm:m=n}), symmetrically equal quadrature weights are guaranteed for such Gaussian quadrature rules. The proof of Theorem \ref{Thm:main} is thereby concluded.
\end{proof}

\subsection{Construction of initial vectors for Jordan-Wielandt matrices}

\label{sec:construction}
From a practical point of view, it is of interest to ask:
\begin{problem}
    \label{prob:JW}
    Let $B \in \mathbb{R}^{n_1 \times n_2}$ and a Jordan-Wielandt matrix $A = Q\Lambda Q^T = \begin{bmatrix}
        O & B \\ B^T & O
    \end{bmatrix}$ have non-decreasing diagonal entries in $\Lambda$. Is there any initial vector $\boldsymbol{v}^1$ that guarantees measure vector $\boldsymbol{\mu}^1 = Q^T \boldsymbol{v}^1$ is an absolute palindrome?
\end{problem}

Problem \ref{prob:JW} would be studied in two scenarios: $n_1 = n_2$ and $n_1 > n_2$. In both cases $B$ is assumed to be of full (column) rank. Besides, note that the property of absolute palindrome can be expressed in the language of linear algebra with the aid of anti-diagonal matrix $I_n^{(anti)}$.

\begin{remark}
    If a vector $\boldsymbol{w} = \begin{bmatrix}
    \boldsymbol{w}_u \\ \boldsymbol{w}_d
\end{bmatrix} \in \mathbb{R}^{2n}$ is an absolute palindrome, then the vector $\boldsymbol{w}^{(abs)} \in \mathbb{R}^{2n}$ with absolute values of entries in $\boldsymbol{w}$ can be written as
\begin{equation}
\label{wabs}
    \boldsymbol{w}^{(abs)} = \begin{bmatrix}
    \boldsymbol{w}^{(abs)}_u \\
    I^{(anti)}_n \boldsymbol{w}^{(abs)}_u
\end{bmatrix}.
\end{equation}
\end{remark}

\subsubsection{Case 1: $n_1 = n_2$}
\label{sec:case1}
Assume $B$ is a square matrix of full rank, we propose that
\begin{theorem}
    \label{prop:1}
    Let $B \in \mathbb{R}^{n \times n}$ be of full rank and $A = \begin{bmatrix}
        O & B \\
        B^T & O
    \end{bmatrix} \in \mathbb{R}^{2n \times 2n}$ be a Jordan-Wielandt matrix with eigen-decomposition $Q^\dagger \Lambda^\dagger {Q^\dagger}^T$. Assume  $\Lambda^\dagger$ has non-decreasing diagonal entries. Then any initial vector that has either form
    $$ \boldsymbol{v}^1_u = \begin{bmatrix}
         \boldsymbol{v}_u \\
         \boldsymbol{0}
    \end{bmatrix} \quad \textit{or} \quad \boldsymbol{v}^1_d = \begin{bmatrix}
        \boldsymbol{0} \\
        \boldsymbol{v}_d
    \end{bmatrix} $$
    with real vectors $\boldsymbol{v}_u, \boldsymbol{v}_d \in \mathbb{R}^n$ and zero vector $\boldsymbol{0} \in \mathbb{R}^n$ guarantees absolute palindrome $\boldsymbol{\mu}^1 = {Q^\dagger}^T \boldsymbol{v}^1$.
\end{theorem}

\begin{proof}
Suppose $B$ has singular value decomposition $B = U\Sigma V^T$, then according to Theorem \ref{thm:m=n}, $A$ can be decomposed as
$$
    A = Q \Lambda Q^T = \frac{1}{2}\begin{bmatrix}
        U & U \\ V & -V
    \end{bmatrix} \begin{bmatrix}
        \Sigma & O \\ O & -\Sigma
    \end{bmatrix} \begin{bmatrix}
        U^T & V^T \\ U^T & -V^T
    \end{bmatrix},
$$
with $O \in \mathbb{R}^{n \times n}$ representing zero matrix.
Without loss of generality, we may assume that the diagonal matrix is in the form $\Sigma = \mathrm{diag}(\sigma_1,\sigma_2,\ldots,\sigma_n)$, with $\sigma_1 \le \sigma_2 \le \cdots \le \sigma_n$. Then $\Lambda = \mathrm{diag}(\sigma_1,\sigma_2,\ldots,\sigma_n,-\sigma_1,-\sigma_2,\ldots,-\sigma_n)$. We need to first make a permutation such that $\tilde{\Lambda}_{i,i} = -\tilde{\Lambda}_{2n+1-i,2n+1-i}, i = 1,\ldots, n$. This step can be well done by introducing a permutation matrix
$$
P = \begin{bmatrix}
     I_n & O \\
    O & I^{(anti)}_n
\end{bmatrix}
$$
such that $\tilde{\Lambda} = P^T \Lambda P$. The columns of orthogonal matrix $Q$ should also be switched so that the eigenpairs of $A$ are not disordered. Namely, $(n+i)^{th}$ and $(2n+1-i)^{th}$ columns need exchange, $i = 1,\ldots, n$, which can also be done by
$$
    \tilde{Q} = \frac{1}{\sqrt{2}} \begin{bmatrix}
        U & U_p \\
        V & -V_p
    \end{bmatrix} = \frac{1}{\sqrt{2}} \begin{bmatrix}
        U & U \\
        U & -V
    \end{bmatrix} P,
$$
where $U_p = U I^{(anti)}_n$ and $V_p = V I^{(anti)}_n$. Then 
$$\tilde{Q} \tilde{\Lambda} \tilde{Q}^T = (QP)(P^T \Lambda P)(P^T Q^T) = Q \Lambda Q^T = A.$$

However, the diagonal entries of $\tilde{\Lambda}$ are not sorted in a necessarily non-decreasing order. As to ensure such ordering in $\Lambda^\dagger$ based on $\tilde{\Lambda}$, it is necessary to switch $(2n+1-i)^{th}$ and $(2n+1-j)^{th}$ entries if we need to switch the $i^{th}$ and $j^{th}$ entries on the diagonal of $\tilde{\Lambda}$. Then the permutation matrix $P^\dagger$ which ensures $\Lambda^\dagger = {P^\dagger}^T \tilde{\Lambda} P^\dagger$ can be written in a block matrix form
\begin{equation}
    P^\dagger  = \begin{bmatrix}
        P^\dagger_{11} & P^\dagger_{12} \\
        P^\dagger_{21} & P^\dagger_{22}
    \end{bmatrix},
\end{equation}
where $P^\dagger_{11} = I^{(anti)}_n P^\dagger_{22} I^{(anti)}_n$ and $P^\dagger_{12} = I^{(anti)}_n P^\dagger_{21} I^{(anti)}_n$. Moreover, the columns of $\tilde{Q}$ needs reordering through right-multiplying $P^\dagger$ so that the columns of $Q^\dagger = \tilde{Q}P^\dagger$ are the eigenvectors corresponding to the non-decreasing eigenvalues of $A$ stored in $\Lambda^\dagger$, as shown by 
\begin{equation}
    Q^\dagger \Lambda^\dagger {Q^\dagger}^T = (\tilde{Q} P^\dagger) ({P^\dagger}^T \tilde{\Lambda} P^\dagger) ({P^\dagger}^T \tilde{Q}^T) = \tilde{Q} \tilde{\Lambda} \tilde{Q}^T = A.
\end{equation}

Recall that the goal is to find initial vectors $\boldsymbol{v}^1$ that guarantee the absolute palindrome of measure vectors $\boldsymbol{\mu}^1 = {Q^\dagger}^T \boldsymbol{v}^1$. Equivalently, we need to write the `absolute' version of $\boldsymbol{\mu}^1$, i.e., $\boldsymbol{\mu}^{1(abs)}$, in the form of \eqref{wabs}. We commence with writing
$$ 
\begin{aligned}
    {Q^\dagger}^T = (\tilde{Q}P^\dagger)^T &= {P^\dagger}^T \tilde{Q}^T\\
    &= \frac{1}{\sqrt{2}} \begin{bmatrix}
        {P^\dagger_{11}}^T & {P^\dagger_{21}}^T \\
        {P^\dagger_{12}}^T & {P^\dagger_{22}}^T
    \end{bmatrix} \begin{bmatrix}
        U^T & V^T \\
        U_p^T & -V_p^T
    \end{bmatrix} \\
    &= \frac{1}{\sqrt{2}} \begin{bmatrix}
        {P^\dagger_{11}}^T U^T + {P^\dagger_{21}}^T U_p^T & {P^\dagger_{11}}^T V^T - {P^\dagger_{21}}^T V_p^T \\
        {P^\dagger_{12}}^T U^T + {P^\dagger_{22}}^T U_p^T & {P^\dagger_{12}}^T V^T - {P^\dagger_{22}}^T V_p^T
    \end{bmatrix}.
\end{aligned} $$
One may prove that
$$
\begin{aligned}
    I_n^{(anti)}({P^\dagger_{11}}^T U^T + {P^\dagger_{21}}^T U_p^T) & = I_n^{(anti)}{I_n^{(anti)}}^T {P^\dagger_{22}}^T {I_n^{(anti)}}^TU^T \\ 
    & \quad + I_n^{(anti)}{I_n^{(anti)}}^T {P^\dagger_{12}}^T {I_n^{(anti)}}^T U_p^T \\
    & = {P^\dagger_{22}}^T U_p^T + {P^\dagger_{12}}^T U^T,
\end{aligned}
$$
which means for the first $n$ columns of ${Q^\dagger}^T$, $i^{th}$ row exactly equals $(2n+1-i)^{th}$ row. Similarly one may deduce
$$
\begin{aligned}
    I_n^{(anti)}({P^\dagger_{11}}^T V^T - {P^\dagger_{21}}^T V_p^T) & = I_n^{(anti)}{I_n^{(anti)}}^T {P^\dagger_{22}}^T {I_n^{(anti)}}^T V^T \\
    & \quad -I_n^{(anti)}{I_n^{(anti)}}^T {P^\dagger_{12}}^T {I_n^{(anti)}}^T V_p^T \\
    & = {P^\dagger_{22}}^T V_p^T - {P^\dagger_{12}}^T V^T \\
    & = - ({P^\dagger_{12}}^T V^T - {P^\dagger_{22}}^T V_p^T),
\end{aligned}
$$
leading to the equivalence of $i^{th}$ row and negative $(2n+1-i)^{th}$ row in ${Q^\dagger}^T$'s last $n$ columns. For simplicity we may write ${Q^\dagger}^T$ as
$$ {Q^\dagger}^T = \begin{bmatrix}
    {Q^\dagger_{11}}^T & {Q^\dagger_{21}}^T \\
    I_n^{(anti)} {Q^\dagger_{11}}^T & -I_n^{(anti)} {Q^\dagger_{21}}^T
\end{bmatrix} $$
and denote $\boldsymbol{v}^1 \in \mathbb{R}^{2n}$ by $\boldsymbol{v}^1 = \begin{bmatrix}
    \boldsymbol{v}_u \\ \boldsymbol{v}_d
\end{bmatrix}$,
with $\boldsymbol{v}_u, \boldsymbol{v}_d \in \mathbb{R}^n$. Then $\boldsymbol{\mu}^1$ reads 
$$
\begin{aligned}
    \boldsymbol{\mu}^1 = {Q^\dagger}^T \boldsymbol{v}^1 & = \begin{bmatrix}
    {Q^\dagger_{11}}^T & {Q^\dagger_{21}}^T \\
    I_n^{(anti)} {Q^\dagger_{11}}^T & -I_n^{(anti)} {Q^\dagger_{21}}^T
\end{bmatrix} \begin{bmatrix}
        \boldsymbol{v}_u \\
        \boldsymbol{v}_d
    \end{bmatrix} \\
    & =  \begin{bmatrix}
        {Q^\dagger_{11}}^T \boldsymbol{v}_u + {Q^\dagger_{21}}^T \boldsymbol{v}_d \\
        I_n^{(anti)}({Q^\dagger_{11}}^T \boldsymbol{v}_u - {Q^\dagger_{21}}^T \boldsymbol{v}_d)
    \end{bmatrix}.
\end{aligned}
$$
Now one can take $\boldsymbol{v}_u = \boldsymbol{0} \in \mathbb{R}^n$ or $\boldsymbol{v}_d = \boldsymbol{0} \in \mathbb{R}^n$ to ensure $\boldsymbol{\mu}^{1(abs)}$ of form \eqref{wabs}, indicating that $\boldsymbol{\mu}^1$ is an absolute palindrome.
\end{proof}

\subsubsection{Case 2: $n_1 > n_2$}
\label{sec:case2}
Assume $B$ is tall, skinny and of full column rank, one can prove that
\begin{theorem}
    \label{prop:2}
    Let $B \in \mathbb{R}^{n_1 \times n_2}$ be of full column rank and $A = \begin{bmatrix}
        O & B \\
        B^T & O
    \end{bmatrix} \in \mathbb{R}^{(n_1+n_2)\times (n_1+n_2)}$ be a Jordan-Wielandt matrix with eigen-decomposition $Q^\dagger \Lambda^\dagger {Q^\dagger}^T$. Assume $\Lambda^\dagger$ has non-decreasing diagonal entries. Then any initial vector $\boldsymbol{v}^1$ that has form
    $ \boldsymbol{v}^1_d = \begin{bmatrix}
        \boldsymbol{0} \\
        \boldsymbol{v}_d
    \end{bmatrix}$
    with real vector $\boldsymbol{v}_d \in \mathbb{R}^{n_2}$ and zero vector $\boldsymbol{0}\in\mathbb{R}^{n_1}$ guarantees absolute palindrome $\boldsymbol{\mu}^1 = {Q^\dagger}^T \boldsymbol{v}^1$.
\end{theorem}
\begin{proof}
    Since $B$ is of full column rank, it can be decomposed as 
    $$ B = U\Sigma V^T = \begin{bmatrix}
        U_1 & U_2
    \end{bmatrix}\begin{bmatrix}
        \Sigma_1 \\ O
    \end{bmatrix} V_1^T,$$
    where $U_1 \in \mathbb{R}^{n_1 \times n_2}$ and $V_1 \in \mathbb{R}^{n_2\times n_2}$ stores the left and right singular vectors of $B$ and $\Sigma_1$ is the diagonal matrix of $B$'s singular values. Then $A$ can be written as
    $$ A = Q\Lambda Q^T = \frac{1}{2} \begin{bmatrix}
        U_1 & U_1 & \sqrt{2}U_2 \\
        V_1 & -V_1 & O
    \end{bmatrix}\begin{bmatrix}
        \Sigma_1 & O & O \\
        O & -\Sigma_1 & O \\
        O & O & O
    \end{bmatrix} \begin{bmatrix}
        U_1^T & V_1^T \\
        U_1^T & -V_1^T \\
        \sqrt{2}U_2^T & O
    \end{bmatrix}. $$
    Similarly we can find a permutation matrix 
    $$ P = \begin{bmatrix}
        I_{n_2} & O \\
        O & I_{n_1}^{(anti)}
    \end{bmatrix} $$
    such that $\tilde{\Lambda} = P^T \Lambda P$ and $\tilde{\Lambda}_{i,i} = \tilde{\Lambda}_{n_1+n_2+1-i,n_1+n_2+1-i}$. Moreover, 
    $$\tilde{Q} = QP = \frac{1}{\sqrt{2}}\begin{bmatrix}
        U_1 & \sqrt{2}U_{2p} & U_{1p} \\
        V_1 & O & -V_{1p}
    \end{bmatrix},$$ 
    where $U_{1p} = U_1 I_{n_2}^{(anti)}$, $V_{1p} = V_1 I_{n_2}^{(anti)}$, $U_{2p} = U_2 I_{n_1-n_2}^{(anti)}$ such that $A = \tilde{Q}\tilde{\Lambda}\tilde{Q}^T$. Similar to the proof of \textbf{Theorem \ref{prop:1}}, one should adjust the columns of $\tilde{Q}$ and the orders of diagonal entries in $\tilde{\Lambda}$ via another permutation matrix $P^\dagger$, i.e., $Q^{\dagger} = \tilde{Q}P^\dagger$, $\Lambda^\dagger = {P^\dagger}^T \tilde{\Lambda} P^\dagger$, so that the diagonal entries are non-decreasing. Then ${Q^\dagger}^T$ has the form
    $$ {Q^\dagger}^T = \begin{bmatrix}
        {Q^\dagger_{11}}^T & {Q^\dagger_{21}}^T \\
        {Q^\dagger_{12}}^T & O \\
        I_n^{(anti)}{Q^\dagger_{11}}^T & -I_n^{(anti)}{Q^\dagger_{21}}^T
    \end{bmatrix}. $$
    Now if the first $n_1$ entries of an initial vector $\boldsymbol{v}^1$ are zeros, namely $\boldsymbol{v}^1 = \begin{bmatrix}
        \boldsymbol{0} \\
        \boldsymbol{v}_d
    \end{bmatrix}$ ($\boldsymbol{0} \in \mathbb{R}^{n_1}, \boldsymbol{v}_d \in \mathbb{R}^{n_2}$), then $\boldsymbol{\mu}^1 = {Q^\dagger}^T \boldsymbol{v}^1$ is an absolute palindrome.
    
\end{proof}

\subsubsection{A simple example}
A simple test is conducted to reveal the symmetry of Ritz values of a $6\times 6$ Jordan-Wielandt matrix through the construction of initial vectors in Theorem \ref{prop:1}. Let
$$ B= \begin{bmatrix}
    1 & 2 & 3 \\
    1 & 2 & 4 \\
    1 & 3 & 4
\end{bmatrix}, A = \begin{bmatrix}
    O & B \\
    B^T & O
\end{bmatrix},$$
and $\boldsymbol{v}_u^1 = (1, 1, 1, 0, 0, 0)^T$, $\boldsymbol{v}_d^1 = (0, 0, 0, 1, 1, 1)^T$, $\boldsymbol{v}_1 = (1, 1, 1, 1, 1, 1)^T$ be three initial vectors for the $4$-step Lanczos process. The Ritz values for these three cases are recorded in Table \ref{tab:simple}.

\begin{table}[htbp]
\caption{A simple test on Ritz values of $A$ generated by different starting vectors}
\label{tab:simple}
\begin{tabular}{|c|c|c|c|c|c|}
\hline
                     & $\theta_1$ & $\theta_2$ & $\theta_3$ & $\theta_4$ & Symmetric Ritz values? \\ \hline
$\boldsymbol{v}_u^1$ & -7.7838    & -0.2612    & 0.2612     & 7.7838     & Yes                    \\ \hline
$\boldsymbol{v}_d^1$ & -7.7838    & -0.2792    & 0.2792     & 7.7838     & Yes                    \\ \hline
$\boldsymbol{v}^1$   & -7.7836    & -0.3895    & 0.2293     & 7.7838     & No                     \\ \hline
\end{tabular}
\end{table}

One may observe that with an unsuitable initial vector which does not follow the construction described in Theorem \ref{prop:1} (e.g., $\boldsymbol{v}^1$ in this test), the corresponding Ritz values would exhibit asymmetry.

\section{Computation complexity for symmetric and asymmetric quadrature nodes}
\label{sec:comparison}
Given a symmetric positive definite matrix $A$, a normalized starting vector $\boldsymbol{v}$ and error tolerance $\epsilon$, the theoretical lower bounds of the Lanczos iteration $m$ that guarantee
\begin{equation}
    \label{eq:epsilon}
|Q(\boldsymbol{v},f,A) - Q_m(\boldsymbol{v},f,A)| = |\mathcal{I} - \mathcal{I}_m| \le \epsilon
\end{equation}
are different for the case of symmetric quadrature nodes and the asymmetric one (see \cite[Section 4.1]{UCS17} and \cite[Section 3]{CK21}). Note that the error of $|\mathcal{I} - \mathcal{I}_m|$ by \cite{UCS17,CK21} is analyzed based on a Bernstein ellipse $E_\rho$ with foci $\pm 1$ and elliptical radius $\rho > 1$, which is formed via an analytic continuation admitted by function $g:[-1,1] \to \mathbb{R}$, while an affine transform $h: [-1,1] \to [\lambda_{\min},\lambda_{\max}]$ is required to ensure $g = f(h(\cdot))$. In detail, the radius $\rho$ is a function of matrix $A$'s condition number $\kappa = \lambda_{\max}/\lambda_{\min} > 1$. 

Since the choice of $m$ determines the computational complexity $\mathcal{O}(n^2m)$, it is important to study the disparity between the two theoretical bounds of $m$ proposed by Theorem \ref{thm:UCSthm4.2} \cite[Theorem 4.2]{UCS17} and Theorem \ref{thm:CK21thm3} \cite[Corollary 3]{CK21} for symmetric and asymmetric Ritz values. To commence, let us provide a brief overview of these theories.
\begin{theorem}
    \label{thm:UCSthm4.2} \cite[Theorem 4.2]{UCS17}
    Let $g$ be analytic in $\left[-1, 1\right]$ and analytically continuable in the open Bernstein ellipse $E_{\rho}$ with foci $\pm1$ and elliptical radius $\rho > 1$. Let $M_\rho$ be the maximum of $|g(t)|$ on $E_\rho$. Then the $m$-point Lanczos quadrature approximation satisfies
    \begin{equation}
        \label{eq:UCSthm4.2}
        |\mathcal{I}-\mathcal{I}_m| \le \frac{4M_\rho}{1 - \rho^{-2}} \rho^{-2m}.
    \end{equation}
\end{theorem}
\begin{theorem}
    \label{thm:CK21thm3} \cite[Corollary 3]{CK21}
    Let $g$ be analytic in $\left[-1, 1\right]$ and analytically continuable in the open Bernstein ellipse $E_{\rho}$ with foci $\pm1$ and elliptical radius $\rho > 1$. Let $M_\rho$ be the maximum of $|g(t)|$ on $E_\rho$. Then the $m$-point Lanczos quadrature approximation with asymmetric quadrature nodes satisfies
    \begin{equation}
        \label{eq:CK21thm3}
        |\mathcal{I} - \mathcal{I}_m| \le \frac{4M_\rho}{1-\rho^{-1}} \rho^{-2m}.
    \end{equation}
\end{theorem}

Note that the elliptical radius $\rho$ is not fixed. In this paper we set 
\begin{equation}
    \label{eq:rho}
    \rho = \frac{\lambda_{\max}+\lambda_{\min}}{\lambda_{\max} - \lambda_{\min}}  = \frac{\sqrt{\kappa} + 1}{\sqrt{\kappa} - 1}.
\end{equation}
Then combining \eqref{eq:epsilon}, \eqref{eq:CK21thm3} and \eqref{eq:UCSthm4.2} we have
\begin{equation}
    \label{eq:m_asym}
    m_{\text{asym}} \ge \frac{1}{2 \log(\rho)}\cdot \left[ \log(4M_{\rho}) - \log(1-\rho^{-1}) - \log(\epsilon) \right],
\end{equation}
\begin{equation}
        \label{eq:m_sym}
        m_{\text{sym}} \ge \frac{1}{2 \log(\rho)}\cdot \left[ \log(4M_{\rho}) - \log(1-\rho^{-2}) - \log(\epsilon) \right].
\end{equation}
One may observe that the two bounds only differs from the term 
$$ m^* = m_{\text{asym}} - m_{\text{sym}} = \log(1+\rho^{-1})/2\log(\rho).$$ Corollary \ref{coro:diffm} studies the lower and upper bounds of such difference.
\begin{corollary}
\label{coro:diffm}
    Let $A\in \mathbb{R}^{n \times n}$ be a real symmetric positive definite matrix with eigenvalues in $[\lambda_{\min},\lambda_{\max}]$, $f$ be an analytic function on $[\lambda_{\min},\lambda_{\max}]$, $\boldsymbol{u}$ be a normalized vector, $\kappa = \lambda_{\max}/\lambda_{\min}$ be the condition number. Then the difference $m^*$ of iterations in the cases of asymmetric and symmetric quadrature nodes should satisfy
    \begin{equation}
        \label{eq:diffm}
        \frac{(\sqrt{\kappa}-1)^2}{8\sqrt{\kappa}} \le m^* \le \frac{\sqrt{\kappa} - 1}{4}
    \end{equation}
    to guarantee the same computation precision of $Q_m(\boldsymbol{u},f,A)$.
\end{corollary}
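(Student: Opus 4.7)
The starting point is the explicit formula
\[ m^* = \frac{\log(1+\rho^{-1})}{2\log\rho} \]
derived just above the statement, together with the fact that $\rho = (\sqrt{\kappa}+1)/(\sqrt{\kappa}-1)$ satisfies $\rho > 1$ and $\rho^{-1} = (\sqrt{\kappa}-1)/(\sqrt{\kappa}+1) \in [0,1)$ whenever $\kappa \ge 1$. The plan is to sandwich $m^*$ by applying elementary two-sided inequalities for the natural logarithm separately to the numerator and denominator, then expressing the resulting algebraic fractions in terms of $\sqrt{\kappa}$ and simplifying.

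For the upper bound, I would first apply the standard inequality $\log(1+x) \le x$ (for $x \ge 0$) to the numerator, yielding $\log(1+\rho^{-1}) \le \rho^{-1}$. For the denominator, I would use $\log\rho \ge 2(\rho-1)/(\rho+1)$ valid for $\rho \ge 1$, which is verified by differentiating the difference and observing that the derivative simplifies to $(\rho-1)^2/[\rho(\rho+1)^2] \ge 0$. Substituting $\rho = (\sqrt{\kappa}+1)/(\sqrt{\kappa}-1)$ collapses this lower bound to the clean form $\log\rho \ge 2/\sqrt{\kappa}$. Combining the two estimates gives
\[ m^* \le \frac{\rho^{-1}}{2 \cdot (2/\sqrt{\kappa})} = \frac{\sqrt{\kappa}(\sqrt{\kappa}-1)}{4(\sqrt{\kappa}+1)} \le \frac{\sqrt{\kappa}-1}{4}, \]
the last inequality following from $\sqrt{\kappa}/(\sqrt{\kappa}+1) < 1$.

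For the lower bound the roles of the two log estimates reverse. On the numerator I would use $\log(1+x) \ge x/2$ for $x \in [0,1]$, which is applicable since $\rho^{-1} < 1$ and is again verified by checking that the difference has derivative $(1-x)/(2(1+x)) \ge 0$ on $[0,1]$. On the denominator I would use the partner inequality $\log\rho \le (\rho^2-1)/(2\rho)$ for $\rho \ge 1$, whose proof mirrors that above. With the chosen $\rho$ this evaluates to $\log\rho \le 2\sqrt{\kappa}/(\kappa-1)$. Assembling,
\[ m^* \ge \frac{\rho^{-1}/2}{2 \cdot 2\sqrt{\kappa}/(\kappa-1)} = \frac{(\kappa-1)(\sqrt{\kappa}-1)}{8\sqrt{\kappa}(\sqrt{\kappa}+1)}, \]
and using $\kappa-1 = (\sqrt{\kappa}-1)(\sqrt{\kappa}+1)$ to cancel the $(\sqrt{\kappa}+1)$ factor produces the advertised $m^* \ge (\sqrt{\kappa}-1)^2/(8\sqrt{\kappa})$.

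The main obstacle is choosing the pair of elementary logarithm bounds on each side so that they are simultaneously tight enough to yield the stated $\sqrt{\kappa}$-expressions and clean enough that the algebraic substitutions collapse to them; with the four specific inequalities listed above the calculations are routine. No subtlety involving $M_\rho$ or $\epsilon$ enters, since these quantities cancel exactly when subtracting $m_{\text{sym}}$ from $m_{\text{asym}}$, and the fact that $m^*$ remains bounded as $\kappa \to 1^+$ (with both bounds vanishing like $\sqrt{\kappa}-1$ to the appropriate power) provides a useful sanity check on the chosen inequalities.
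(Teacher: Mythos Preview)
Your argument is correct, but it differs from the paper's in the specific logarithm inequalities chosen. The paper uses only the classical pair
\[
1-\tfrac{1}{x}\le\log x\le x-1 \qquad\text{and}\qquad \frac{x}{1+x}\le\log(1+x)\le x,
\]
applying $\log(1+\rho^{-1})\ge 1/(\rho+1)$ together with $\log\rho\le\rho-1$ for the lower bound, and $\log(1+\rho^{-1})\le\rho^{-1}$ together with $\log\rho\ge 1-\rho^{-1}$ for the upper bound; each combination collapses \emph{directly} to $1/\bigl(2(\rho^2-1)\bigr)$ and $1/\bigl(2(\rho-1)\bigr)$ and hence to the stated $\sqrt{\kappa}$-expressions with no further estimate needed. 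By contrast, you invoke the sharper inequalities $\log\rho\ge 2(\rho-1)/(\rho+1)$ and $\log\rho\le(\rho^2-1)/(2\rho)$ (and $\log(1+x)\ge x/2$ on $[0,1]$), which are valid and in fact give a tighter intermediate upper bound $\sqrt{\kappa}(\sqrt{\kappa}-1)/\bigl(4(\sqrt{\kappa}+1)\bigr)$ before you discard the factor $\sqrt{\kappa}/(\sqrt{\kappa}+1)<1$. So your route works and even yields slightly stronger intermediate estimates, while the paper's route is shorter because its cruder inequalities happen to land exactly on the target expressions.
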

\begin{proof}
    As mentioned, the difference between \eqref{eq:m_asym} and \eqref{eq:m_sym} lies in the term $m^* = \log(1+\rho^{-1})/2\log(\rho)$, so the inequalities \eqref{eq:diffm} result from analyzing the lower and upper bounds of it. Note that 
    $$ 1-\frac{1}{x} \le \log(x) \le x-1, \text{ for all }x>0,$$    $$ \frac{x}{1+x} \le \log(1+x) \le x, \text{ for all } x > -1.$$
    Since $\rho = (\sqrt{\kappa}+1)/(\sqrt{\kappa}-1) > 1$, the lower bound becomes
    $$ \frac{\log(1+\rho^{-1})}{2\log(\rho)} \ge \frac{\frac{1}{\rho + 1}}{2(\rho - 1)} = \frac{1}{2(\rho^2 - 1)} = \frac{(\sqrt{\kappa} - 1)^2}{8\sqrt{\kappa}},$$
    and the upper bound reads
    $$ \frac{\log(1+\rho^{-1})}{2\log(\rho)} \le \frac{\frac{1}{\rho}}{2(1-\frac{1}{\rho})} = \frac{1}{2(\rho - 1)} = \frac{\sqrt{\kappa} - 1}{4}.$$
\end{proof}

One may observe that when matrix becomes ill-conditioned (i.e., $\kappa$ becomes larger), the difference between two bounds of $m$ in asymmetric and symmetric cases increases. We make further discussions on Corollary \ref{coro:diffm} in Section \ref{sec:test_diffm}. 

\section{Numerical experiments}
\label{sec:experiments}
\subsection{Test on Theorem \ref{Thm:main}}
\label{sec:TestThmMain}
We conduct experiments to verify whether Theorem \ref{Thm:main} is valid. The first three cases study the reproducible matrix $A = H \Lambda H^T \in \mathbb{R}^{n\times n}$ with the diagonal matrix 
$$\Lambda = \mathrm{diag}(\lambda_1, \lambda_2, \cdots, \lambda_n),$$
and the Householder matrix is constructed according to \cite{zhu09}
$$ H = I_n - \frac{2}{n}(\mathbf{1}_n \mathbf{1}^T_n), $$
where $I_n$ represents the identity matrix of size $n$ and  $\mathbf{1}_n$ denotes an $n$-dimensional vector of all-ones. The eigenvalues stored in $\Lambda$ are predetermined and $n = 50$ is fixed. The fourth case focuses on the \texttt{nd3k} matrix from \cite{DH11}.
\begin{itemize}
    \item[-] Case 1: $\{\lambda_i\}_{i=1}^{50} = \{i/50\}_{i=1}^{50}$, $\boldsymbol{v} = \mathbf{1}_{50}/\sqrt{50}$;
    \item[-] Case 2: $\{\lambda_i\}_{i=1}^{50} = \{1/(51-i)\}_{i=1}^{50}$, $\boldsymbol{v} = \mathbf{1}_{50}/\sqrt{50}$;
    \item[-] Case 3: $\{\lambda_i\}_{i=1}^{50} = \{i/50\}_{i=1}^{50}$, $\boldsymbol{v} = (1,2,\cdots,50)^T/\Arrowvert (1,2,\cdots,50)^T \Arrowvert$;
    \item[-] Case 4: \texttt{nd3k} matrix, $\boldsymbol{v} = (1,\cdots,1,-1,\cdots,-1)^T/\sqrt{9000} \in \mathbb{R}^{9000}.$
\end{itemize}
Details of these 4 cases with different eigenvalues and initial vectors can be seen in Table \ref{Tab:4cases}. 
\begin{table}[htbp]
\caption{Details of 4 cases with different eigenvalue distributions and starting vectors}
\label{Tab:4cases}
\begin{center}
    \begin{tabular}{|c|c|c|c|c|}
\hline
       & Case 1 & Case 2 & Case 3 & Case 4\\ \hline
Do $A$ have symmetric eigenvalues? & Yes & No & Yes & No     \\ \hline
Is $\boldsymbol{\mu}_1$  an absolute palindrome?  & Yes & Yes & No & No    \\ \hline
Are Ritz values symmetric? & Yes & No & No & No \\ \hline
Any sufficient conditions? & Yes & ? & ? & ? \\ \hline
\end{tabular}
\end{center}

\end{table}
Recall that the figure of $\mu(t)$ would be central symmetric about $(\bar{\lambda}, \mu(\bar{\lambda}))$ if $A$ has symmetric eigenvalue distribution and $\boldsymbol{\mu}_1$ is an \emph{absolute palindrome}. Figure \ref{Fig:Test} showcases the measure function $\mu(t)$ alongside the corresponding $10$ Ritz values for the four cases, offering a visual depiction that substantiates the validity of Theorem \ref{Thm:main} to a certain degree.

Indeed, it is important to acknowledge that while Theorem \ref{Thm:main} serves as a sufficient condition guaranteeing the symmetry of Ritz values produced by the Lanczos iteration, it is not strictly necessary. Researchers may delve deeper to explore whether alternative conditions exist that render these Ritz values symmetric in cases $2,3,4$.

\begin{figure}[htbp]
    \centering
    \subfloat[Case 1: symmetric Ritz values]{\label{Fig:case1}\includegraphics[width = 0.5\textwidth]{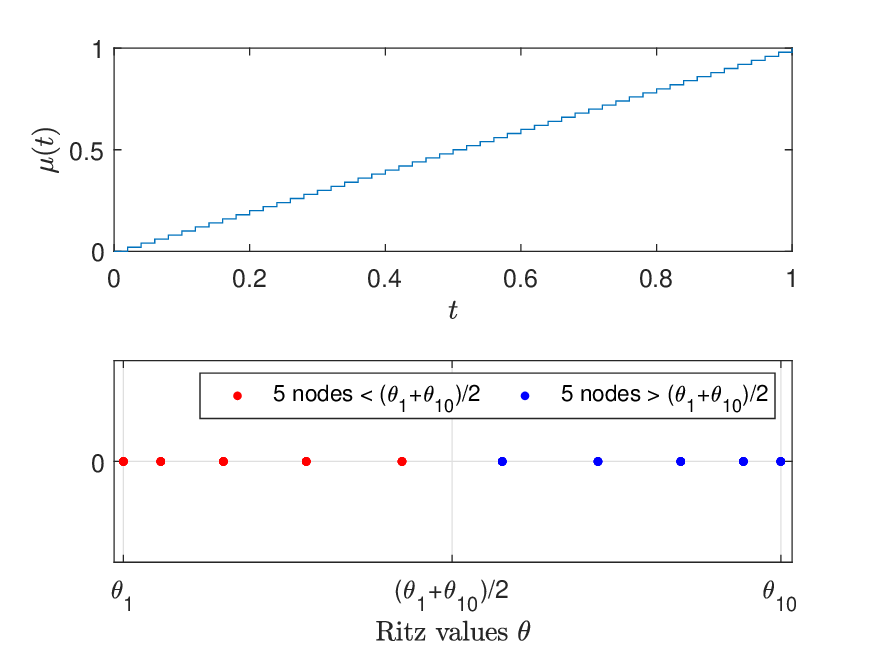}}
    \subfloat[Case 2: asymmetric Ritz values]{\label{Fig:case2}\includegraphics[width = 0.5\textwidth]{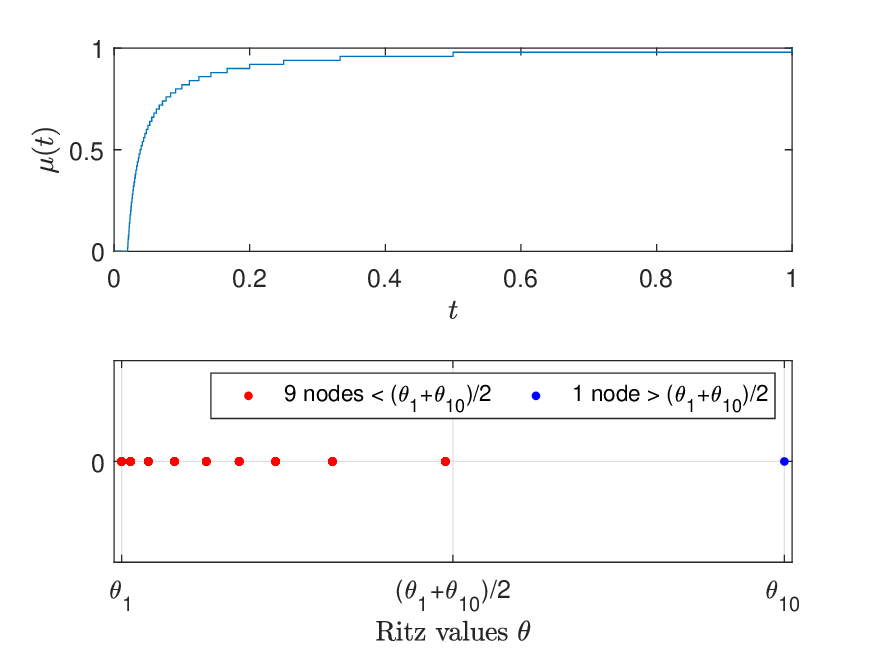}}\\
    \subfloat[Case 3: asymmetric Ritz values]{\label{Fig:case3}\includegraphics[width = 0.5\textwidth]{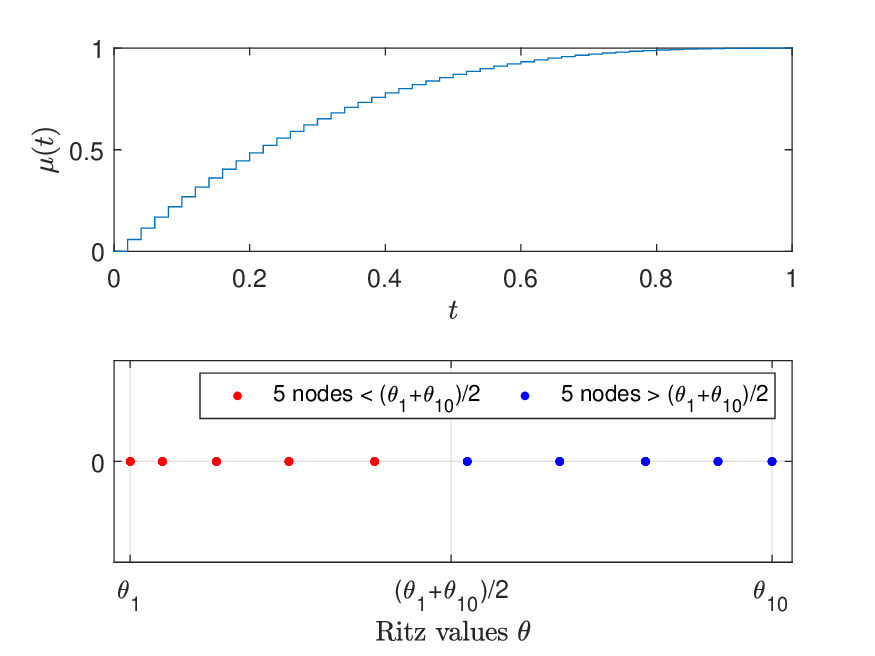}}
    \subfloat[Case 4: asymmetric Ritz values]
    {\label{Fig:case4}\includegraphics[width = 0.5\textwidth]{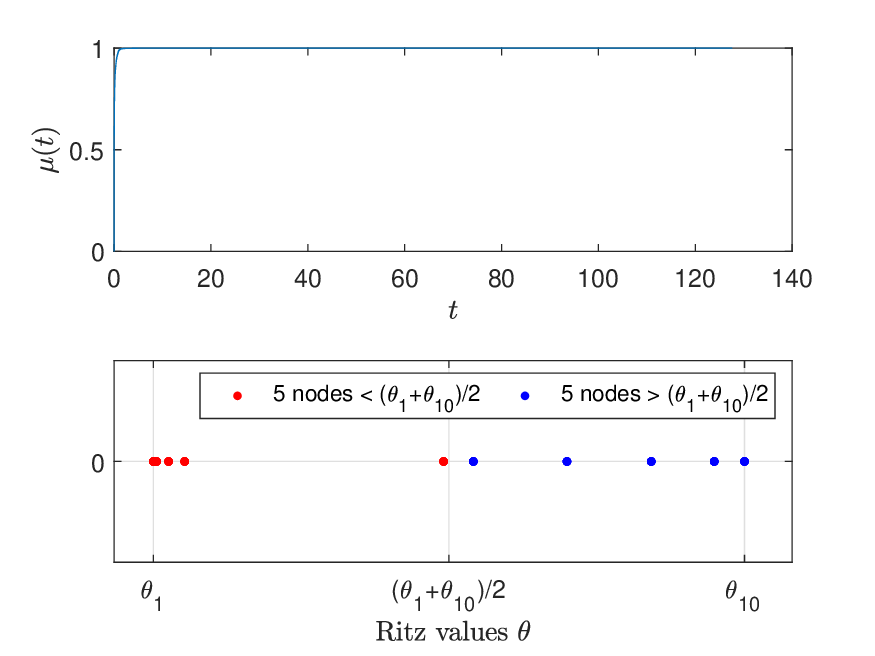}}
    \caption{Plots of discrete measure functions \eqref{eq:measure_f} and the locations of Ritz values in four cases.}
    \label{Fig:Test}
\end{figure}

\subsection{Test on Theorem \ref{prop:1} in applications of complex network}
\label{sec:estrada}
We show that one can construct starting vectors as in Theorem \ref{prop:1} to ensure the symmetry of quadrature in applications such as complex network. More specifically, in applications of estimating Estrada index (EI), such construction would reduce the variance of estimation and not violate the unbiasedness.

In the analysis of layer-coupled multiplex networks, a directed network of $n$ nodes with asymmetric adjacency matrix $B \in \mathbb{R}^{n\times n}$ can be extended to a bipartite undirected network with symmetric \textit{block supra-adjacency matrix} of form \eqref{Eq:A} via bipartization \cite{BEK13,BB20,BS22}. The Estrada index, namely 
\begin{equation}
    \label{eq:estrada}
    EI(A,\beta) = \mathrm{tr}(e^{\beta A}) = \sum_{i=1}^{2n} \boldsymbol{e}_i^T e^{\beta A} \boldsymbol{e}_i,
\end{equation}
provides scalar measures for the connectivity of the full network. The classical Hutchinson trace estimator \cite{H90} uses the relation
\begin{equation}
    \label{eq:Hutchinson}
    \mathrm{tr}(f(A)) = \sum_{i=1}^{2n}[f(A)]_{ii} \approx \frac{1}{N}\sum_{k=1}^N \boldsymbol{z}_k^T f(A) \boldsymbol{z}_k =  \frac{2n}{N}\sum
_{k=1}^N \frac{\boldsymbol{z}_k^T}{\sqrt{2n}}f(A) \frac{\boldsymbol{z}_k}{\sqrt{2n}},
\end{equation}
where the entries of $\boldsymbol{z}_k \in \mathbb{R}^{2n}$ follow the Rademacher distribution, i.e., every entry can take $+1$ or $-1$ with probability $1/2$ each. Gaussian trace estimator or normalized Rayleigh-quotient trace estimator also helps approximate $\mathrm{tr}(f(A))$ in \eqref{eq:Hutchinson} \cite{AT11}. Such quadratic terms in \eqref{eq:Hutchinson} can be approximated by the Gaussian quadrature \cite{UCS17}. Based on Theorem \ref{prop:1}, one may generate initial vectors with half Rademahcer entries ($\pm 1$) and half zeros (denoted by \textit{half-zero-half-Rademacher} vectors in this context) in the form of $\boldsymbol{v}_u^1$ or $\boldsymbol{v}_d^1$ in Section \ref{sec:case1} and Section \ref{sec:case2} to guarantee symmetry quadrature nodes and unbiased estimators for \eqref{eq:estrada}. Recall that 
\begin{equation}
    \label{eq:hutch2}
    \sigma^2 \mathrm{tr}(f(A)) = \mathbb{E}[\boldsymbol{z}^T f(A)\boldsymbol{z}],
\end{equation}
where the entries of $\boldsymbol{z}$ follows a distribution $\mathcal{Z}$ of mean zero and variance $\sigma^2$ \cite{H90}. For half-zero-half-Rademahcer initial vectors, $\mathcal{Z}$ has mean zero and variance $1/2$ since 
$$ \mathbb{E}[\mathcal{Z}] = \frac{1}{4}\cdot 1 + \frac{1}{4} \cdot (-1) + \frac{1}{2} \cdot 0 = 0, $$
$$ \sigma^2 = \mathbb{V}[\mathcal{Z}] = \frac{1}{4} \cdot (1-0)^2 + \frac{1}{4}\cdot (-1-0)^2 + \frac{1}{2}\cdot (0-0)^2 = \frac{1}{2}. $$

Numerical experiments are conducted to show that half-zero-half-Rademacher initial vectors can reduce the variance of estimating $\mathrm{tr}(e^{\beta A})$ by the stochastic Lanczos quadrature method \cite{UCS17}, compared with using fully Rademacher distributed initial vectors. 

\subsubsection{Test on synthetic matrix}
We first study a synthetic randomly generated Jordan-Wielandt matrix with $B = U\Sigma V^T \in \mathbb{R}^{1000\times 1000}$. $U$ and $V$ are obtained by generating Gaussian distributed matrices and then orthogonalizing them, i.e., 
$$\texttt{U = orth(randn(1000,1000));} \texttt{V = orth(randn(1000,1000));}$$
The diagonal matrix $\Sigma$ (storing $B$'s singular values) is also randomly generated, 
$$\texttt{Sigma = diag(randn(1000,1));}$$
Then the tested synthetic matrix is created by 
$$\texttt{A = [zeros(1000,1000) U*Sigma*V'; V*Sigma*U' zeros(1000,1000)];}$$
In our test, one normalized Rademacher vector and two normalized half-zero-half-Rademacher vectors are randomly generated (but with reproducible seed generator) as the initial vectors for 100 times. We compare the variances of these estimators for $\mathrm{tr}(e^{\beta A})$ with $\beta = 1$ and 100 Lanczos iterations. As shown in Table \ref{table:syn} and Figure \ref{fig:syn}, half-zero-half-Rademacher initial vectors have lower variances than the one of Rademacher distributed vectors within the same computational budget. 

\begin{figure}[htbp]
    \centering
    \includegraphics[width = 0.7 \textwidth]{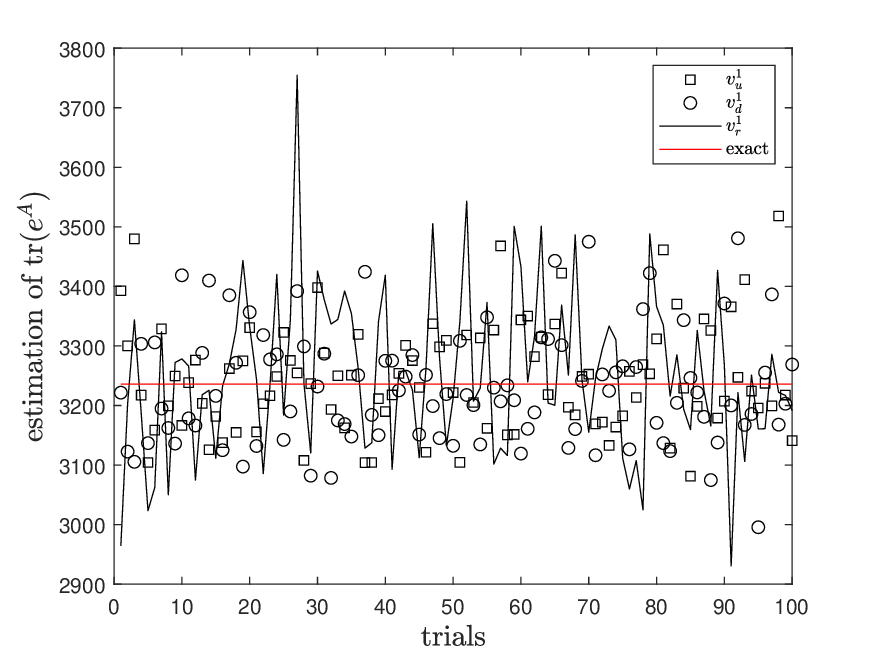}
    \caption{100 trials of stochastic Lanczos quadrature estimators of $\mathrm{tr}(e^{\beta A})$ with $\beta = 1$, $m=100$, synthetic bipartite matrix $A$ and different initial vectors. $\boldsymbol{v}^1_u$ and $\boldsymbol{v}_d^1$ are half-zero-half-Rademacher vectors with either upper or lower part of entries taking $\pm 1$, while all elements of $\boldsymbol{v}^1_r$ are Rademacher distributed.}
    \label{fig:syn}
\end{figure}

\begin{table}[htbp]
\caption{Variances of stochastic Lanczos quadrature estimators in Figure \ref{fig:syn}.}
\label{table:syn}
\begin{center}
    \begin{tabular}{@{}llll@{}}
\toprule
                            & $\boldsymbol{v}_u^1$ & $\boldsymbol{v}_d^1$ & $\boldsymbol{v}_r^1$ \\ \midrule
Variance of estimations & 90.86                             & 97.08                             & 134.57                          \\ \bottomrule
\end{tabular}
\end{center}
\end{table}

\subsubsection{Test on \textit{email-Eu-core-temporal} data set}
Then a directed network example of 1005 nodes and 24929 edges from \textit{email-Eu-core-temporal} data set \cite{PBL17,DH11} is tested. The adjacency matrix $B$ of this directed network is non-symmetric, so a Jordan-Wielandt matrix in the form of \eqref{Eq:A} is built and the estimation of Estrada index $\mathrm{tr}(e^{\beta A})$ with parameter $\beta = 0.5/\lambda_{\max}$ is of interest. Similar to the results of synthetic test, Table \ref{table:email} and Figure \ref{fig:email} also reflect the effect of variance reduction by utilizing half-zero-half-Rademacher vectors in the Lanczos process.

\begin{figure}[htbp]
    \centering
    \includegraphics[width = 0.7 \textwidth]{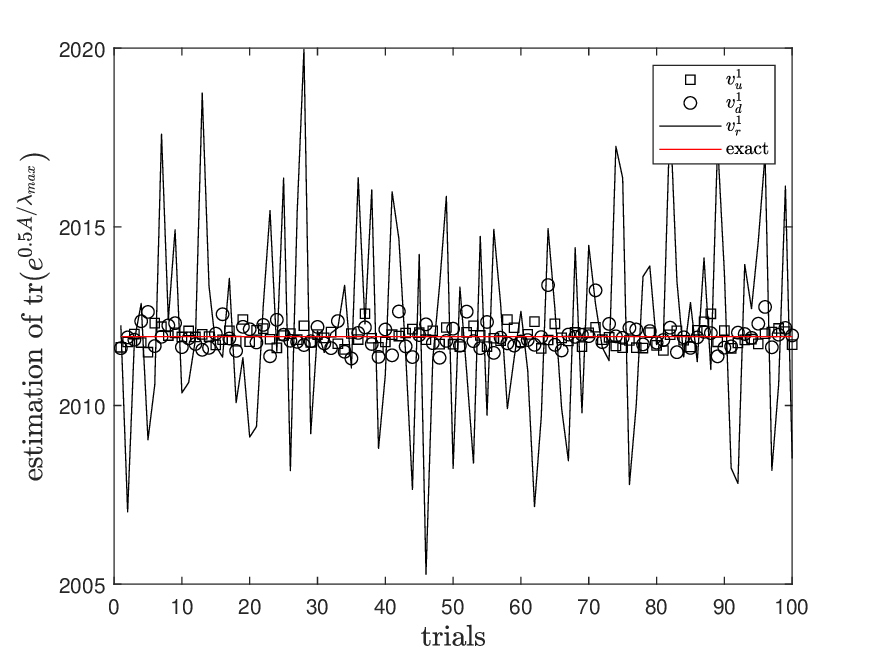}
    \caption{100 trials of stochastic Lanczos quadrature estimators of $\mathrm{tr}(e^{\beta A})$ with $\beta = 0.5/\lambda_{\max}$, $m=100$, bipartite matrix based on \textit{email-Eu-core-temporal} data set \cite{PBL17,DH11} and different initial vectors. $\boldsymbol{v}^1_u$ and $\boldsymbol{v}_d^1$ are half-zero-half-Rademacher vectors with either upper or lower part of entries taking $\pm 1$, while all elements of $\boldsymbol{v}_r^1$ are Rademacher distributed.}
    \label{fig:email}
\end{figure}

\begin{table}[htbp]
\caption{Variances of stochastic Lanczos quadrature estimators in Figure \ref{fig:email}.}
\label{table:email}
\begin{center}
    \begin{tabular}{@{}llll@{}}
\toprule
                            & $\boldsymbol{v}_u^1$ & $\boldsymbol{v}_d^1$ & $\boldsymbol{v}_r^1$ \\ \midrule
Variance of estimations & 0.22                             & 0.32                             & 2.69                          \\ \bottomrule
\end{tabular}
\end{center}
\end{table}

\subsection{Test on Corollary \ref{coro:diffm}}
\label{sec:test_diffm}
We conduct another test on the difference $m^*$ between the numbers of  Lanczos iterations that ensure the same precision of approximating quadratic forms \eqref{eq:GQ} based on asymmetric and symmetric quadrature nodes. Figure \ref{fig:diff} depicts the lower and upper bounds of $m^*$, and the average of two bounds along with the exact $m^*$ for different condition numbers $\kappa = 10,50,100,500,\ldots,5\times10^4,10^5$. The elliptical radius is chosen as $\rho = (\sqrt{\kappa} + 1)/(\sqrt{\kappa} -1)$.

\begin{figure}
    \centering
    \includegraphics[width = 0.7\textwidth]{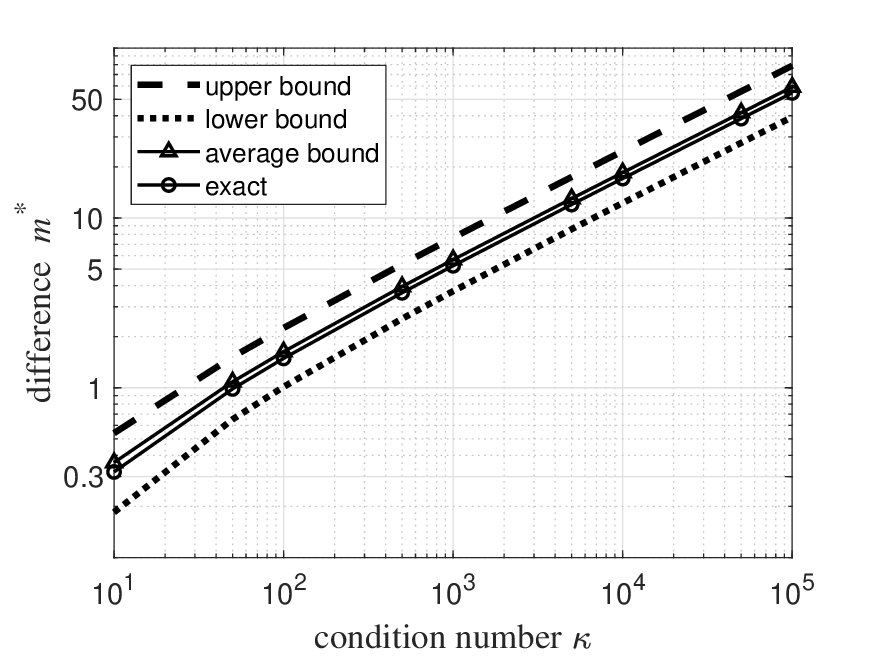}
    \caption{Lower, upper bounds, the average of the two bounds, and the exact values of the absolute difference $m^*$ between the Lanczos iterations required for equivalent accuracy in approximating quadratic forms \eqref{eq:GQ} with asymmetric and symmetric quadrature nodes, where $\kappa = 10,50,100,500,\ldots,5\times 10^4,10^5$.}
    \label{fig:diff}
\end{figure}
It is observed that the exact difference $m^*$ can be roughly substituted by the average of its lower and upper bounds in \eqref{eq:diffm}. When $\kappa$ becomes large, e.g., $\kappa = 10^5$, the gap of Lanczos iterations is around $55$, which leads to huge computational costs for extremely large matrices.

\section{Concluding remarks}

\label{sec:cr}

For a long time the matrix-based quadrature rule generated via the Golub-Welsch algorithm has been assumed and used as symmetric. It is only recently that doubts have begun to be cast into account about the existence of such symmetric quadrature rules for practical applications. This paper proves that with a delicate choice of the starting vector in the Lanczos process, a wide class of the Jordan-Wielandt matrices can realize symmetric quadrature rules. Note that the study on the existence of such symmetric quadrature rule for general matrices requires further discussion. According to this analysis, such a general realization of symmetry is probably both problem dependent and initial vector dependent.




\vspace{0.5cm}

\thanks{\textbf{Acknowledgements:} We are grateful to Professor Zhongxiao Jia of Tsinghua University and Professor James Lambers of University of Southern Mississippi for critical reading of the manuscript and providing useful feedback. We also appreciate the feedback provided by the reviewers.}

\bibliographystyle{amsplain}
\bibliography{ref.bib}

\end{document}